\theoremstyle{plain}
  \newtheorem{theorem}{Theorem}[section]
  \newtheorem*{mainthm}{Main Theorem}
  \newtheorem{lemma}[theorem]{Lemma}
  \newtheorem{prop}[theorem]{Proposition}
\theoremstyle{definition}
  \newtheorem{definition}[theorem]{Definition}
\newcommand{\field}[1]{\mathbb{#1}}
\def\prho#1{p_{1\over \rho}\left(#1\right)}
\def\fC{\field{C}}
\def\fR{\field{R}}
\def\cD{\mathcal{D}}
\def\cT{\mathcal{T}}
\def\cS{\mathcal{S}}
\date{\today}
\begin{document}

\title[Existence of a Lorenz renormalization fixed point]{Existence of a Lorenz
renormalization fixed point of an arbitrary critical order}
\author{Denis Gaidashev, Bj\"orn Winckler}

\begin{abstract}
  We present a proof of the existence of a renormalization fixed point for
  Lorenz maps of the simplest non-unimodal combinatorial type
  $(\{0,1\},\{1,0,0\})$ and with a critical point of arbitrary order $\rho>1$.
\end{abstract}
\maketitle

\setcounter{page}{1}


\section{Introduction}

E. N. Lorenz in \cite{Lor} demonstrated  numerically the existence of certain three-dimensional flows that have a complicated behavior. The {\it Lorenz flow} has a saddle fixed point with a one-dimensional unstable manifold and an infinite set of periodic orbits whose closure constitutes a global attractor of the flow.

As it is often done in dynamics, one can attempt to understand the behaviour of a three-dimensional flow by looking at the first return map to an appropriately chosen two-dimensional section. In the case of the Lorenz flow, it is convenient to chose the section as a plane transversal to the local stable manifold, and, therefore,  intersecting it along a curve $\gamma$. The first return map is discontinuous at $\gamma$.

The {\it geometric Lorenz flow} has been introduced in \cite{Wil}: a Lorenz flow with an extra condition that the return map preserves a one-dimensional foliation in the section, and contracts distances between points in the leafs of this foliation at a geometric rate. Since the return maps is contracting in the leafs, its dynamics is asymptotically one-dimensional, and can be understood in terms of a map acting on the space of leafs (an interval). This interval map has a discontinuity  at the point of the interval corresponding to $\gamma$, and is commonly called  the {\it Lorenz maps}. More precisely,

\begin{definition} \label{Lorenz_def}
Let $s>0$ and $\rho > 0$. A $C^{s}$-Lorenz map $\psi: [-1,r] \mapsto [-1,r]$ is a map given by a pair $(f,g)$, such that:
\begin{itemize}
\item[1)] $f:[-1,0)\mapsto [-1,r]$ and $g: (0,r] \mapsto [-1,r]$. $f$ and $g$ are continuous and strictly increasing;
\item[2)] there exists $\rho>0$, the exponent of $\psi$, such that
$$f(x)=l(|x|^\rho), \quad g(x)=t(|x|^\rho),$$
$l$ and $t$ being  $C^s$-diffeomorphisms.
\end{itemize}
\end{definition}

Guckenheimer and Williams have proved in \cite{GW} that there is an open set of three-dimensional vector fields, that generate a geometric Lorenz flow with a smooth Lorenz map of $\rho<1$. However, one can use the arguments of \cite{GW} to construct open sets of vector fields with Lorenz maps of $\rho \ge 1$.  Similarly to the unimodal family, Lorenz maps with $\rho > 1$ have a richer dynamics that combines contraction with  expansion.

For any $x \in [-1,r] \setminus \{0\}$ such that $f^n(x) \ne 0$ for all $n \in \field{N}$, define the itinerary $\omega(x) \in \{0,1\}^{\field{N}}$ of $x$ as the sequence $\{\omega_0(x), \omega_1(x),\ldots\}$,  such that
$$ \omega_i = \left\{0, \quad f^i(x)<0, \atop 1, \quad f^i(x)>0.\right.$$
If one imposes the usual  order $0<1$, then for any two $\omega$ and $\tilde{\omega}$ in $\{0,1\}^{\field{N}}$  we say that $\omega<\tilde{\omega}$ iff there exists $r \ge 0$ such that $\omega_i = \tilde{\omega}_i$ for all $i<r$ and $\omega_r<\tilde{\omega}_r$.

The limits 
$$\omega(x^+)=\lim_{y \rightarrow x^+}  \omega(y), \quad \omega(x^-)=\lim_{y \rightarrow x^-}  \omega(y),$$
where $y$ runs through points which are not preimages of $0$, exists for all $y \in [-1,r]$.

{\it The kneading invariant} $K(f)$ of $f$ is the pair $(K^-(f),K^+(f))=(\omega(0^-),\omega(0^+))$. Hubbard and Sparrow have shown in \cite{HS} that $(K^-,K^+)$ is the kneading invariant of some topologically expansive Lorenz map iff for all  $n \in \field{N}$

$$K^-_0=0,  \quad   K^+_0=1,  \quad   \sigma(K^+) \le \sigma^n(K^+) < \sigma(K^-),   \quad  \sigma(K^+) < \sigma^n (K^-) \le \sigma(K^-),$$
here $\sigma$ is the shift in $\{0,1\}^{\field{N}}$.

Kneading invariants for a general Lorenz map, not necessarily expansive,  satisfy a weaker condition:
$$K^-_0=0, \quad K^+_0=1, \quad \sigma(K^+) \le \sigma^n(K^\pm) \le  \sigma(K^-), \quad n \in \field{N}.$$
Conversely, any sequence as above is a kneading sequence for some Lorenz  map.

A Lorenz map $f$ is called {\it renormalizable} if there exist $p$ and $q$, $-1<p<0<q<r$, such that the first return map $(f^n,g^m)$, $n>1, m>1$, of $[p,q]$ is a Lorenz map.

The intervals $f^i([p,0))$, $1 \le i \le n-1$, are pairwise disjoint, and disjoint from $[p,q]$. So are the intervals,  $f^i((0,q])$, $1 \le i \le m-1$. Since these intervals do not contain zero, we can associate a finite sequence of $0$ and $1$ to each sequence of the intervals:
$$K^-=\{K^-_0,\ldots,K^-_{n-1}  \}, \quad  K^+=\{K^+_0,\ldots,K^+_{m-1}\},$$ 
which will be called the {type of renormalization}. The subset of maps $\ref{Lorenz_def}$ which are renormalizable of type $(\alpha,\beta)$ is referred to as the domain of renormalization $\cD_{\alpha,\beta}$ (cf.~\cite{MM}).

The study of renormalizable Lorenz maps was initiated by Tresser et al. (see
e.g.~\cite{CCT}) but a more recent paper is that of Martens and de Melo
(see~\cite{MM}). The latter authors consider the combinatorics of the
renormalizable maps, and prove several results about the domains of
renormalization and the structure of the parameter plane for two-dimensional
Lorenz  families.

The second author of the present paper has provided a computer assisted proof
of existence of a renormalization fixed point for the renormalization operator
of type  $(\{0,1\},\{1,0,0\})$ in \cite{Win1}. Furthermore, issues of existence
of renormalization periodic points and hyperbolicity have been addressed by
the second author in \cite{Win2}, where it is proved that the limit set of
renormalization, restricted to monotone combinatorics with the return time of
one branch being large, is a Cantor set, and that each point in the limit set
has a two-dimensional unstable manifold. This result holds for any real
$\rho>1$.

In this paper we give an analytic proof of the result of \cite{Win1} for a general exponent of the Lorenz map $\rho>1$.

We consider the renormalization operator $R$ of type $(\alpha,\beta)=(\{0,1\},\{1,0,0\})$, specifically $R(f,g)=(\hat{f},\hat{g})$ where 
\begin{align}
\label{eq1}  \hat{f}(z)&= \lambda^{-1} g(f(\lambda z)),\\
\label{eq2}   \hat{g}(z)&= \lambda^{-1} f(f(g(\lambda z))),\\
\label{eq3}  \lambda&= -f(f(-1)).
\end{align}

As usual, the notation $C^\omega$ will denote the analytic class of maps.

\begin{mainthm}
For every $\rho > 1$, there exists a $C^\omega$-Lorenz map $(f^*,g^*)$ which is a fixed point of the renormalization of type $(\{0,1\},\{1,0,0\})$.
\end{mainthm}

\begin{figure}
 \begin{center}
{\includegraphics[height=70mm,width=70mm,angle=-90]{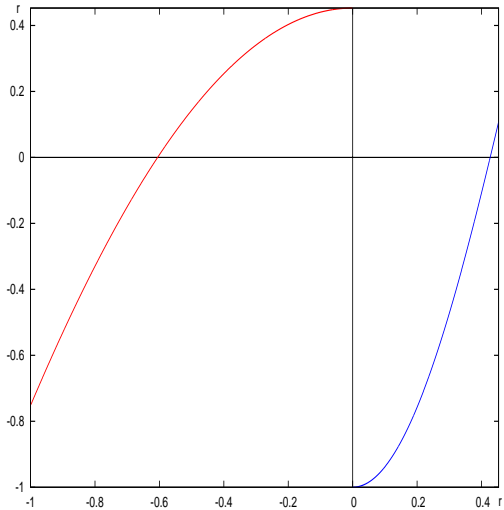}}
\caption{ \it The renormalization fixed  point for $\rho=2$ computed in \cite{Win1}, r=0.453 \ldots}
\end{center}
\end{figure}

\medskip

To prove the theorem we introduce an operator on an appropriate functional space of the diffeomorphic parts of the inverse branches of $f$ and $g$. The crucial ingredient of  our proof is a demonstration that there exists a subset in this functional space, invariant under the operator, characterized by the condition that the nonlinearities of the inverse branches are negative and bounded away from zero. It is this negativity of the nonlinearity that seems to be indispensable to complete the proof. 

We would like to remark that the results of this paper could be made somewhat more general: indeed a similar method can be used to demonstrate existence of renormalization fixed points of  other types for longer $\alpha$ and $\beta$. This is the range of $\alpha$ and $\beta$ that was not accessible through the methods used  in \cite{Win2} where the condition that one of the branches has a very long return time (long $\alpha$ or $\beta$) was crucial. 

However, we believe that at this point it would be timely to attempt to built a complete renormalization theory for Lorenz maps that would  mirror that for unimodal maps. Specifically, one could attempt to extend the results of \cite{Win2} to all return times, and  demonstrate existence of the whole renormalization horseshoe via real or complex a priori bounds.  We believe, that the negativity  of the nonlinearity of the inverse branches could again play an important role in such proofs.

\section{An operator on the Epstein class} \label{Epstein}

Consider the action of this operator on the {\it little Epstein class} of functions, that is, functions $f$ and $g$ factorizable as $f=l \circ p_\rho \circ -id$, and $g=t \circ p_\rho$, $\rho>1$, where  $p_\rho$ is the exponential map
$$p_\rho(z)=z^{\rho},$$
and $l$, $t$ are some diffeomorphisms (to be specified later) of the range of
$p_\rho$ (cf.\ \cite{Eps1}, \cite{Eps2}, \cite{Eps3}). Ignoring the issue of domains of maps for a moment, we get for the fixed point version of $(\ref{eq1})$-$(\ref{eq2})$. 
\begin{align} 
\nonumber l \circ p_\rho \circ -id&= \lambda^{-1} \circ  t \circ p_\rho \circ l \circ p_\rho \circ -\lambda \\
\nonumber \lambda \circ l \circ p_\rho&= t \circ p_\rho \circ l \circ p_\rho \circ\lambda \\
\nonumber t^{-1} \circ \lambda  \circ l \circ p_\rho &= p_\rho \circ l \circ p_\rho \circ \lambda \\
\nonumber p_{1\over \rho} \circ t^{-1} \circ \lambda \circ l \circ p_\rho &=  l \circ p_\rho \circ \lambda \\
\nonumber l^{-1} \circ p_{1\over \rho} \circ t^{-1} \circ \lambda \circ l \circ p_\rho &=  p_\rho \circ \lambda \\
\nonumber l^{-1} \circ p_{1\over \rho} \circ t^{-1} \circ \lambda &= p_\rho  \circ 
\lambda \circ p_{1\over \rho} \circ l^{-1}\\
\label{new_eq1} l^{-1} \circ p_{1\over \rho} \circ t^{-1} \circ \lambda &= \lambda^\rho \circ l^{-1},
\end{align}
here, $p_{1\over \rho}$ is the root function 
$$p_{1\over \rho}(r e^{i \theta})=r^{1 \over \rho} e^{i \theta \over \rho}.$$

In a similar way, we get from $(\ref{eq2})$ the following equation for inverse diffeomorphic parts $l^{-1}$ and $t^{-1}$ of the inverse branches of the fixed point of $R$: 
\begin{equation}\label{new_eq2}
t^{-1} \circ -p_{1\over \rho} \circ l^{-1} \circ -p_{1\over \rho} \circ l^{-1} \lambda=\lambda^\rho \circ t^{-1}. 
\end{equation}

Define diffeomorphisms $U$ and $V$ by setting
$$l^{-1}(z)=a U(r-z), \quad t^{-1}(z)=b V(z+1),$$ 
where the normalizing constants $a$ and $b$ will be chosen below. $U$ and $V$ are defined on\begin{equation} 
\label{UVdomains} [0,r-f(-1)]   \quad {\rm and} \quad  [0,g(0)+1],
\end{equation}
 respectively. Then $(\ref{new_eq1})$ and $(\ref{new_eq2})$ become
\begin{align*}
\lambda^\rho U(r-z)&=U\left(r-\prho{b V(\lambda z+1)}\right),  \\ 
\lambda^\rho V(z+1)&=V\left(1-\prho{a U \left(r+\prho{aU(r-\lambda z)}\right) } \right), \end{align*}
or 
\begin{align}
\label{eqq1} \lambda^\rho U(z)&=U\left(r-\prho{b V(\lambda (r-z)+1)}\right),  \\ 
\label{eqq2} \lambda^\rho V(z)&=V\left(1-\prho{a U \left(r+\prho{aU(r-\lambda (z-1))}\right) } \right). 
\end{align}

Set 
\begin{equation}\label{ab}
b={ r^\rho \over V(\lambda r+1) }, \quad {\rm and} \quad a={1 \over U(r+y)},
\end{equation}
where $y=y(\lambda)$ solves
\begin{equation}\label{y}
y=\prho{U(r+\lambda) \over U(r+y)}.  
\end{equation}
We will demonstrate in  Section $\ref{decoupled}$ that $(\ref{y})$ has a unique solution whenever $U$ is an appropriate functional class.

Equations $(\ref{eqq1})$ and $(\ref{eqq2})$ become
\begin{align}
\label{new_eqq1} \lambda^\rho U &= U \circ \Psi_{V,\lambda,r},&
\Psi_{V,\lambda,r} &\equiv r-r \prho{{ V(\lambda (r-z)+1) \over  V(\lambda r+1)}},  \\ 
\label{new_eqq2} \lambda^\rho V &= V \circ \Phi_{U,\lambda,r},&
\Phi_{U,\lambda,r} &\equiv 1-\prho{a U \left(r+\prho{aU(r-\lambda (z-1))}\right) }.
\end{align}

Notice, that the  normalization constants $a$ and $b$ have been chosen so that 
$$\Psi_{V,\lambda,r}(0)=\Phi_{U,\lambda,r}(0)=0.$$



At this point we will ``decouple'' the system $(\ref{new_eqq1})-(\ref{new_eqq2})$, i.e. we will allow  the scaling $\lambda$ in $(\ref{leq})$ and $(\ref{mueq})$ to be two independent quantities.  These new scaling parameters will be called $\lambda$ and $\mu$. At the intuitive level, we are introducing more freedom in the system, and changing the problem of a search four a quadruple $(U,V,\lambda,r)$ from a pair of fixed point equations into a problem of looking for pairs $(U,\lambda)$ and $(V,\mu)$ independently from separate equations while keeping $r$ as a parameter. The second is a more accessible problem. We will demonstrate that  solutions of the decoupled fixed point equations exists, and that $\lambda$ and $\mu$, corresponding to fixed points, are  continuous in the parameter $r$. At the end, the value of $r$ will be adjusted so that $\lambda=\mu$, providing a solution to the original problem. The decoupled system takes the following form.
\begin{align}
\label{leq}  \lambda^\rho U &= U \circ \Psi_{V,\lambda,r},& \Psi_{V,\lambda,r} &\equiv r-r \prho{ V(\lambda (r-z)+1) \over  V(\lambda r+1)},  \\ 
\label{mueq}  \mu^\rho V&= V \circ \Phi_{U,\mu,r},& \Phi_{U,\mu,r} &\equiv 1-\prho{U \left(r+\prho{ U(r-\mu (z-1)) \over U(r+y)}\right) \over U(r+y)  }, \\
 \label{y_mu} & & y &= p_{1 \over \rho}\left(U(r+\mu) \over U(r+y)\right).
\end{align}

Set
\begin{equation}
\label{zw} Z(z)=\prho{ U(r+z) \over U(r+y) }, \quad  W(z)=\prho{ V(z+1) \over V(\lambda  r+1) },
\end{equation}
with this notation the decoupled system becomes
\begin{align}
\label{dec_eq1}  \lambda^\rho U &= U \circ \Psi_{V,\lambda,r},&
\Psi_{V,\lambda,r}(z) &\equiv r-r W(\lambda(r-z)),  \\ 
\label{dec_eq2}  \mu^\rho V&= V \circ \Phi_{U,\mu,r},& \Phi_{U,\mu,r}(z)
&\equiv 1-Z(Z(\mu(1-z))),& y &= Z(\mu).
\end{align}

We will define an operator $\cT_r$ on pairs $(U,V)$, that belong to an appropriate functional space,  as follows. Given a pair $(U,V)$, let $\lambda=\lambda(V,r)$ and $\mu=\mu(U,r)$ be the solutions  (if they exist) of the equations
\begin{align}
\label{lambda_eq} \lambda^\rho&= \Psi_{V,\lambda,r}'(0)=\lambda {r\over \rho}
{V'(\lambda r +1) \over V(\lambda r+1)}= \lambda r W'(\lambda  r),\\
\label{mu_eq}  \mu^\rho&= \Phi_{U,\mu,r}'(0)=\mu {y \over \rho^2} {U'(r+y)
\over U(r+y)} {U'(r+\mu) \over U(r+\mu) }= \mu Z'(y)Z'(\mu).  
\end{align}
Define 
\begin{equation}
\label{new_eqqq} (\tilde{U},\tilde{V}) \equiv \cT_r(U,V)= \left( \lambda^{-\rho} U \circ \Psi_{V,\lambda,r},  \mu^{-\rho} V \circ \Phi_{U,\mu,r}\right), 
\end{equation}
and 
\begin{align*}
\tilde{Z}(z)&= \prho{ \tilde{U}(r+z) \over \tilde{U}(r+\tilde{y}) }, \qquad \tilde{y}
= \prho{ \tilde{U}(r+\mu) \over \tilde{U}(r+\tilde{y}) },\\
\tilde{W}(z)&= \prho{ \tilde{V}(1+z) \over \tilde{V}(1+\lambda r) }. 
\end{align*}

In the following sections we will choose $(U,V)$ in an appropriate subset $\cS$   of a compact space of functions holomorphic in a double slit plane, and demonstrate that 
\begin{itemize}
\item[a)] for all $(U,V) \in \cS$ the solutions $(\lambda,\mu)$ of
  $(\ref{lambda_eq})-(\ref{mu_eq})$ exist and are unique for every $r$, and $\cT_r$ is a continuous operator of $\cS$ into itself;

\item[b)] $\lambda<\mu$ for sufficiently  small $r$, and $\mu>\lambda$ for sufficiently large $r$ for all $(U,V) \in \cS$; 

\item[c)] the iterates $\cT^n_r(U_0,V_0)$  converge to a fixed point $(U^*_r,V^*_r)$ of $\cT_r$ uniformly in $r$, in particular the maps $r \rightarrow \lambda^*_r=\lambda(V^*_r,r)$ and  $r \rightarrow \mu^*_r=\mu(U^*_r,r)$  are continuous for a range of positive $r$. 
\end{itemize}

These three facts will imply that there exists a value $r'$ of $r$ such that $\lambda^*_{r'}=\mu^*_{r'}$, and the pair $(U^*_{r'},V^*_{r'})$ solves $(\ref{eqq1})-(\ref{eqq2})$.

To prove a), b) and c) above, we will construct a subset of a compact space of functions $U$ and $V$ holomorphic on a double slit plane, such that the nonlinearities of $Z$ and $W$ are {\it negative} on  the real  slice of the domain  
$$ N_Z(x) \le \Sigma<0, \quad  N_W(x) \le \Gamma<0,$$
and we will demonstrate that $\tilde{Z}$ and $\tilde{W}$ for the image of $(U,V)$ under $\cT_r$ has the same bounds on the nonlinearity.

We would like to note, that assumptions on the nonlinearity seemed to be unnecessary in similar proofs of  the existence of the fixed points for the unimodal maps, for example, in the proof of existence of the Feigenbaum fixed point in  \cite{Eps1}-\cite{Eps3}.

\section{Preliminaries}

\subsection{Herglotz bounds}

We will proceed with some definitions.

The upper and the lower half planes will be denoted as
$$\fC_\pm \equiv \{z \in \fC: \pm {\Im (z)} > 0 \}.$$

Let $J=(-a,b) \subset \fR$. Given such interval $J \subset \fR$, denote
$$
\fC_J \equiv \fC_+ \cup \fC_- \cup J.   
$$ 




We will further define the space of Herglotz--Pick functions
$$\Omega(J)\equiv \left\{u: u \quad {\rm is \quad  holomorphic \quad on} \quad \field{C}_J, u(z)=\overline{u(\overline{z})}, u(0)=0 \right\}.$$
$\Omega(J)$ is a compact metric space.


 Functions in $\Omega(J)$ admit the following integral representation:
\begin{equation}\label{int_rep}
f(z)-f(z_0)= a (z-z_0) + \int d \nu(t) \left({1 \over t-z} - {1 \over t-z_0}   \right), 
\end{equation}
where $\nu$ is a measure supported in ${\fR} \setminus (-a,b)$. This integral
representation can be used to obtain the following {\it Herglotz bounds} on
$\Omega(J)$
\begin{align}
\label{first_der} { a \over x  (a+x) } &\le {f'(x) \over f(x)} \le { b \over x  (b-x) }  , \quad x \in (-a,b).
\end{align}


Notice, that the integration of the Herglotz bound $(\ref{first_der})$ gives for all $y>x >0$:
\begin{equation}\label{ratio_1}
{y (a+x) \over x  (a+y)} \le {f(y) \over f(x)} \le {y (b-x) \over x  (b-y)}.
\end{equation}

Next,  we denote by $\Omega_c(J)$ the subclass of functions  $f \in \Omega(J)$ normalized at some point $c$, $b>c>0$, as $f(c)=1$. Using the integral representation $(\ref{int_rep})$, one can  demonstrate that any $f \in  \Omega_c(J)$ satisfies the following bounds
\begin{align} 
\label{function_1}  {1  \over c} {a+c \over a+x} &\ge {f(x) \over x} \ge  {1  \over c} {b-c \over b-x}, \quad x \in (-a,c),\\
 \label{function_2}  {1  \over c} {a+c \over a+x} &\le {f(x) \over x} \le  {1  \over c} {b-c \over b-x}, \quad x \in (c,b).
\end{align}

Suppose $f \in \Omega(J)$, $J \ne \emptyset$. Then, for every $z \in J$ and every finite complex sequence $v_0,...,v_N$, one has the following relation for the derivatives of $f$
$$\sum_{j,k=0}^N{ { f^{(j+k+1)}(z) \over (j+k+1)!} v_j^* v_k }\ge 0,$$
In particular, all odd derivatives of $f \in \Omega(J)$ are non-negative on $J$, and so is the Schwarzian
\begin{equation}
\label{schwarzian}{f''' \over f'} -{3 \over 2} \left({f'' \over f' }\right)^2=\left({f'' \over f'}\right)' -{1 \over 2} \left({f'' \over f' }\right)^2 \ge 0, \quad {\rm on} \ J.
\end{equation}
In particular, the nonlinearity of a Herglotz--Pick function is increasing.

The positivity of the Schwarzian has the following consequences. Let $J=(-a,b)$ be non-empty. Denote $g=f''/f'$, suppose that $g$ is non-zero  in $[x,y] \subset J$, and integrate the inequality $g'(x) \ge g(x)^2/2$:
$${1 \over g(x)} -{1 \over g(y)} \ge {y -x  \over 2}.$$
If $g(x)>0$, then $g(y)>0$, and $g(x) \le 2 / (y-x)$, which is also true  if $g(x)<0$. At the same time, $g(y) \ge -2 / (y-x)$.  Taking the limit $y \rightarrow b$ in the first inequality, and $x  \rightarrow -a$  in the second, we get 

\begin{equation} 
\label{second_der} {-2 f'(x) \over a+x}  \le f''(x) \le {2 f'(x) \over b-x},  \quad x \in (-a,b).
\end{equation}

\subsection{Nonlinearity}

Next, assume that the nonlinearity 
$$N_f(x)={f''(x) \over f'(x)}$$
of $f$ is positive on $J$. Then, we can use the positivity of the Schwarzian derivative to obtain
$$
\left( \ln  N_f(x)  \right)'={f'''(x) \over f''(x)} -{f''(x) \over f'(x)} = {f'(x) \over f''(x)}  \left(  {f'''(x) \over f'(x)} -\left({f''(x) \over f'(x)} \right)^2 \right)  > {1 \over 2} N_f(x).
$$

Alternatively, if the nonlinearity  is negative, then
\begin{align*}
\left( \ln \left( -N_f(x) \right)  \right)'&= \left(\ln(-f''(x))-\ln(f'(x)) \right)'\\
&= {f'''(x) \over f''(x)} -{f''(x) \over f'(x)} = {f'(x) \over f''(x)}  \left(  {f'''(x) \over f'(x)} -\left({f''(x) \over f'(x)} \right)^2 \right)  < {1 \over 2} N_f(x).
\end{align*}

In either case, the solution to the the initial  value problems $\left( \ln  N_f(x)  \right)' > N_f(x)/2$, $N_f(x_0)=N_0$, or  $\left( \ln(-N_f(x) ) \right)' < N_f(x)/2$, $N_f(x_0)=N_0$,  ,  is
$$N_f(x) > {2 N_0 \over 2 - N_0 (x-x_0)}, \quad x  \ge x_0,$$
therefore,
\begin{equation}
\label{pos_1} N_f(y) \ge {2 N_f(x) \over 2 - N_f(x) (y-x)}
\end{equation}
for all $y \ge x$ in the case of a nonlinearity of a constant sign.

Furthermore,
$$\left( \ln f'(x) \right)'= N_f(x),$$
and,  under the same assumption of $N_f$ of a constant sign, the initial value problem 
$$ { f''(x) \over f'(x)} \ge {2 N_0 \over 2 - N_0 (x-x_0)}, \quad x  \ge x_0, \quad f'(x_0)=f_0, \quad f''(x_0) /f'(x_0)=N_0$$
is 
$$f'(x) \ge {4 f_0 \over \left( 2-N_0  (x-x_0) \right)^2},$$
and we get
\begin{equation}
\label{pos_2}  f'(y) \ge {4 f'(x) \over \left( 2-N_f(x)  (y-x)  \right)^2 },
\end{equation}
for all $y \ge x$.

Given a real constant $\sigma$ and  a real $c \in J=(-a,b)$, we set
$$\Omega_{\gtrless \sigma }^c(J)  \equiv  \left\{f \in \Omega(J): { \partial_x^2 p_{1 \over \rho}{\left( f(c+x) \right)} \over \partial_x p_{1 \over \rho}{\left(f(c+x)\right)}}   \gtrless \sigma, \quad x \in (-c,b-c) \right\}.$$

Notice, the set $\Omega_{\gtrless \sigma }^c(J)$ in general is not a convex subset of $\Omega(J)$.


\subsection{Schwarz Lemma}

Finally, we will mention the following easy consequence of the Schwarz Lemma  which will play an important role in our proofs below (cf \cite{Eps2}):

\begin{lemma}\label{Schwarz_lemma}
Suppose $f$ is a holomorphic map of $\fC_J$, $J=(-a,b)$, into $\fC_J'$, $J'=(-a',b')$, which fixes $0$ then 
\begin{equation}
\label{deriv} |f'(0)| \le {a' b'(a+b) \over a b (a'+b')  }.
\end{equation}
\end{lemma}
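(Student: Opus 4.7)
The plan is to reduce the inequality to the classical Schwarz--Pick lemma on the upper half-plane $\fC_+$ by uniformizing both doubly-slit planes. Since $\fC_J=\fC\setminus((-\infty,-a]\cup[b,\infty))$ is the complement in the Riemann sphere of two disjoint closed arcs, it is simply connected, and the Riemann mapping theorem furnishes a conformal equivalence to $\fC_+$. I would prefer to write one down explicitly rather than just invoke existence.

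Concretely, consider
$$\Phi_{a,b}(z) \;:=\; i\sqrt{\frac{z+a}{b-z}},$$
with the principal branch of the square root. The Möbius map $z\mapsto (z+a)/(b-z)$ sends $-a\mapsto 0$, $b\mapsto\infty$, $\infty\mapsto-1$, and a direct check shows it carries the two slits onto $(-\infty,0]$; the principal square root then maps onto the right half-plane, and multiplication by $i$ lands in $\fC_+$. Differentiation yields
$$\Phi_{a,b}(0)=i\sqrt{a/b},\qquad |\Phi_{a,b}'(0)|=\frac{a+b}{2\,b^{3/2}\sqrt{a}},$$
with analogous formulas for $\Phi_{a',b'}$.

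Conjugation then produces $\tilde f:=\Phi_{a',b'}\circ f\circ\Phi_{a,b}^{-1}:\fC_+\to\fC_+$, a holomorphic self-map of the upper half-plane sending $i\sqrt{a/b}$ to $i\sqrt{a'/b'}$. The Schwarz--Pick inequality $|\tilde f'(w)|\le \Im\tilde f(w)/\Im w$ immediately gives $|\tilde f'(i\sqrt{a/b})|\le\sqrt{a'b/(ab')}$, and the chain rule $\tilde f'(\Phi_{a,b}(0))=\Phi_{a',b'}'(0)\,f'(0)/\Phi_{a,b}'(0)$ rearranges this into
$$|f'(0)| \;\le\; \sqrt{\frac{a'b}{ab'}}\cdot\frac{|\Phi_{a,b}'(0)|}{|\Phi_{a',b'}'(0)|}.$$
Substituting the two derivative values and noting that the factor $(b')^{3/2}\sqrt{b}/(b^{3/2}\sqrt{b'})$ collapses to $b'/b$ gives exactly $a'b'(a+b)/(ab(a'+b'))$, as required.

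There is no genuine obstacle; the only non-routine step is spotting the uniformizer, after which the argument is Schwarz--Pick plus bookkeeping. Equivalently, the computation amounts to finding the Poincaré density of $\fC_J$ at the origin, which comes out to $(a+b)/(4ab)$ by pulling $\rho_{\fC_+}(w)=1/(2\Im w)$ back through $\Phi_{a,b}$; the lemma then reads as the statement that $f$ is non-expanding for the hyperbolic metric at $0$.
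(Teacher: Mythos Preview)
Your argument is correct. The explicit uniformizer $\Phi_{a,b}(z)=i\sqrt{(z+a)/(b-z)}$ does carry $\fC_J$ conformally onto $\fC_+$ (the M\"obius part sends the arc $(-\infty,-a]\cup\{\infty\}\cup[b,\infty)$ on the sphere to $(-\infty,0]\cup\{\infty\}$, so the complement lands in the cut plane), your derivative computations check out, and the Schwarz--Pick bound $|\tilde f'(w)|\le\Im\tilde f(w)/\Im w$ together with the chain rule gives exactly the stated inequality after the algebra you indicate.

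As for comparison: the paper does not actually supply a proof of this lemma. It is stated as ``an easy consequence of the Schwarz Lemma'' with a reference to Epstein~\cite{Eps2}, and nothing more. Your write-up is therefore not competing with any argument in the paper; it simply fills in what the authors left as a citation. The route you take---uniformize each slit plane explicitly, apply Schwarz--Pick on $\fC_+$, and unwind the chain rule---is the standard one and is presumably what Epstein does as well. Your closing remark that the bound is equivalent to computing the Poincar\'e density of $\fC_J$ at the origin as $(a+b)/(4ab)$ and invoking the contraction of the hyperbolic metric is a clean way to phrase the same content.
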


\section{Statement of results}

We will now give a more precise statement of what will be proved in the following sections.

Set 
\begin{equation}
\label{lambda-mu-plus} \lambda_+(r)=\left( r \over r+1\right)^{1 \over \rho}, \quad \mu_+(r)=\left( 1 \over (r+1)^2\right)^{1 \over \rho},
\end{equation}
and
\begin{equation}\label{Js}
J_U=\left(r\!-\!{r \over \lambda_+(r) \mu_+(r)},r\!+\!{1\over \lambda_+(r)} \right), \ J_V=\left(1\!-\!{1 \over \lambda_+(r) \sqrt{\mu_+(r)}} ,1\!+\!{r \over \mu_+(r)} \right).
\end{equation}

The intervals $J_U$ and $J_V$ serve as lower bounds on the real slices of the  maximal domains of definition of $U$ and $V$, in particular, they include the intervals $(\ref{UVdomains})$.  Notice, that the left end points of the real slices of the domains of definition of the functions $\Psi_{V,\lambda,r}$ and  $\Phi_{U,\mu,r}$ (see $(\ref{leq})$-$(\ref{mueq})$) are specified by the conditions that the arguments of the root functions $p_{1 \over \rho}$ be non-negative. This leads to our choice of the right end points in $(\ref{Js})$: $V(\lambda (r-z)+1) \ge 0 \implies \lambda (r-z)+1 \ge 0$ and $U(r-\mu(z-1)) \ge 0 \implies r-\mu(z-1) \ge 0$. The left end points of $(\ref{Js})$, are then specified by the conditions that the arguments of $V(\lambda (r-z)+1)$  and $U(r-\mu(z-1))$, appearing in the definition of $\Psi_{V,\lambda,r}$ and  $\Phi_{U,\mu,r}$ do not exceed the right end points of  $(\ref{Js})$. Also, notice that $U$ enters the definition of $\Phi_{U,\mu,r}$ in a composition with itself, which makes finding bounds for the second fixed point problem more complicated. To produce usable bounds, we had to restrict the domain of $J_V$ even further: this is the reason why $\mu_+(r)$ enters the left end point of $J_V$ as $\sqrt{\mu_+(r)}$.

\bigskip

The main results of the paper are the following two theorems.

\bigskip

\noindent {\bf Theorem A.} {\it  For any $\rho>1$,  there exist
${r_+}>{r_-}>0$, and two functions $\Sigma(r)<0$ and $\Gamma(r)<0$, continuous
on $({r_-},{r_+})$, such that:

\bigskip

\noindent i)  for every $r \in ({r_-},{r_+})$ and $(U,V) \in \Omega_{<\Sigma}^r(J_U) \times \Omega_{<\Gamma}^1(J_V)$  there is a unique solution $(\lambda,\mu) \in (0,\lambda_+(r)) \times (0,\mu_+(r))$ of the equations $(\ref{lambda_eq})$ and $(\ref{mu_eq})$, and the functions $r \mapsto \lambda(r)$, $r \mapsto \mu(r)$ are continuous on $({r_-},{r_+})$;

\bigskip

\noindent  ii)  $\cT_r$ is a well-defined, continuous operator of the subset $\Omega_{<\Sigma}^r(J_U) \times \Omega_{<\Gamma}^1(J_V)$ into itself, where $J_U$ and $J_V$ are as in $(\ref{Js})$;

\bigskip

\noindent  iii)  for any $(U_0,V_0) \in  \Omega_{<\Sigma}^r(J_U) \times \Omega_{<\Gamma}^1(J_V)$ the iterates $\cT_r^n(U_0,V_0)$ converge uniformly to a fixed point of $\cT_r$.
}

Part $i)$ of the Theorem will be mostly proved in Lemma $\ref{scalings}$, while the continuity part of the statement of Part $i)$ will be finished in the last Section $\ref{proofB}$. Part $ii)$ will be proved in Proposition $\ref{inv-nonlinearity}$. Finally, Part $iii)$ of the Theorem will be proved in Section $\ref{proofB}$.

\bigskip


Existence of a renormalization fixed point follows from the following theorem.

\bigskip

\noindent {\bf Theorem B.} {\it For every $\rho>1$ there exists $r' \in ({r_-}, {r_+})$ such that $\lambda(r)=\mu(r)$, and, therefore, the system
$$\lambda^\rho U=U \circ \Psi_{V,\lambda,r'}, \quad \lambda^\rho V=V \circ \Phi_{U,\lambda,r'}$$
has a solution $(\lambda^*,U^*_{r'},V^*_{r'}) \in (0,1) \times \Omega(J_U) \times \Omega(J_V)$.
}

Theorem B will be proved in the last Section $\ref{proofB}$.

\section{Existence of the scaling parameters for the decoupled system}\label{decoupled}

Consider functions 
$$(U,V) \in \Omega(J_U) \times \Omega(J_V),$$
and let $Z$ and $W$ be as in $(\ref{zw})$. Such $(Z,W)$  are in $\Omega(J_Z) \times \Omega(J_W)$, where
$$J_Z=\left( -r, {1\over \lambda_+}  \right), \quad J_W=\left(-1, {r  \over  \mu_+} \right).$$

We will start with a simple lemma that insures that the ``parameter'' $y$ from $(\ref{y_mu})$ is well-defined.

\begin{lemma}
For any $U\in \Omega(J_U)$, $\mu \in (0,1)$ and $\rho>1$, the equation
$$y=p_{1\over \rho}\left({ U(x+\mu) \over  U(x+y)}\right)$$
has a unique solution $y \in (\mu,1)$.

Furthermore, if $U \in \Omega^r_{<\sigma}(J_U)$ for some $\sigma<0$, then $y>y_-$, where
\begin{equation}
\label{y-} y_- \equiv {\sqrt{r^2+4 (r+\mu)} -r \over 2}>\sqrt{\mu}.
\end{equation}
\end{lemma}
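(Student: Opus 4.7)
My approach divides the lemma into three steps: existence and uniqueness of the solution $y$, the quantitative lower bound $y>y_-$ under the concavity hypothesis, and the subsidiary inequality $y_->\sqrt{\mu}$.

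For existence and uniqueness, I consider the auxiliary function $F(y):=y^{\rho}\,U(r+y)-U(r+\mu)$ on $y\in(\mu,1)$. Since $U\in\Omega(J_U)$ is Herglotz--Pick with $U(0)=0$, it is strictly positive and strictly increasing on $(0,b_U)$; hence $F$ is continuous and strictly increasing, with $F(\mu)=(\mu^{\rho}-1)\,U(r+\mu)<0$ (using $\mu<1$ and $\rho>1$) and $F(1)=U(r+1)-U(r+\mu)>0$. The intermediate value theorem produces the unique $y\in(\mu,1)$ solving $F(y)=0$, equivalently the fixed-point equation.

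For the sharper bound $y>y_-$, set $\phi(x):=\prho{U(r+x)}$, so the equation reads $y\phi(y)=\phi(\mu)$. The hypothesis $U\in\Omega^r_{<\sigma}(J_U)$ with $\sigma<0$ forces $N_{\phi}<\sigma<0$, so $\phi$ is strictly concave on its domain. Its tangent-line inequality at $\mu$, namely $\phi(y)<\phi(\mu)+(y-\mu)\phi'(\mu)$, combined with $\phi(y)=\phi(\mu)/y$, rearranges (since $y\in(\mu,1)$) to
\[
h(y) \;:=\; \frac{y(y-\mu)}{1-y} \;>\; \frac{\phi(\mu)}{\phi'(\mu)} \;=\; \rho\,\frac{U(r+\mu)}{U'(r+\mu)}.
\]
A direct computation shows that $y_-$ satisfies $y_- -\mu=(1-y_-)(r+y_-)$, hence $h(y_-)=y_-(r+y_-)=r+\mu$; moreover $h'(y)\propto(1-\mu)-(1-y)^2>0$ on $(\mu,1)$ since $1-\sqrt{1-\mu}<\mu$ for $\mu\in(0,1)$, so $h$ is strictly increasing and $y>y_-$ is equivalent to $h(y)>h(y_-)=r+\mu$. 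The desired bound therefore reduces to the pointwise estimate
\[
\rho\,\frac{U(r+\mu)}{U'(r+\mu)} \;\geq\; r+\mu,
\]
which I would obtain from the Herglotz--Pick bound $(\ref{first_der})$: with $b_U=r+1/\lambda_+(r)$, the inequality $U'(r+\mu)/U(r+\mu)\leq b_U/[(r+\mu)(b_U-r-\mu)]$ translates the requirement into the scalar inequality $\lambda_+(r)\bigl(r+\rho\mu\bigr)\leq\rho-1$. This is the main obstacle of the proof: it must be verified using the explicit form $\lambda_+(r)=(r/(r+1))^{1/\rho}$ together with the constraint $\mu<\mu_+(r)$ on the appropriate parameter range $r\in(r_-,r_+)$ singled out by Theorem A.

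The subsidiary bound $y_->\sqrt{\mu}$ is immediate: rewriting $y_-^2+ry_-=r+\mu$ as $y_-^2=\mu+r(1-y_-)$, we have $y_-^2>\mu$ provided $y_-<1$, which in turn is equivalent to $\mu<1$; thus $y_->\sqrt{\mu}$.
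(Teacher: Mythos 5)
Your existence-and-uniqueness argument via $F(y)=y^\rho U(r+y)-U(r+\mu)$ is exactly the paper's, and your verification of $y_->\sqrt{\mu}$ is fine. The gap is in how you try to close the bound $y>y_-$.

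Your tangent-line reduction to $h(y)>\phi(\mu)/\phi'(\mu)$, followed by the observation that $h$ is increasing with $h(y_-)=r+\mu$, is correct; so the whole argument rests on the single inequality $\rho\,U(r+\mu)/U'(r+\mu)\geq r+\mu$. The problem is your attempt to establish it via the Herglotz bound $(\ref{first_der})$, which leads you to $\lambda_+(r)(r+\rho\mu)\leq\rho-1$. That scalar inequality is simply \emph{false} in general: as $\rho\to 1^+$ the right side tends to $0$ while the left stays bounded away from $0$, and even for $\rho=2$ at the values of $r$ that matter (around $0.45$) one has $\lambda_+\approx 0.56$ and $\mu$ can be close to $\mu_+\approx 0.69$, giving $\lambda_+(r+2\mu)\approx 1.02>1$. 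So you cannot ``verify it on the appropriate parameter range'' — the Herglotz estimate is not sharp enough, and flagging this as the main obstacle is precisely the sign that the route is wrong.

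What you are missing is that the hypothesis $U\in\Omega(J_U)$ forces $U(0)=0$, i.e.\ $\phi(-r)=0$. For a concave increasing $\phi$ this immediately yields
$$
\phi'(\mu)\ \leq\ \frac{\phi(\mu)-\phi(-r)}{\mu-(-r)}\ =\ \frac{\phi(\mu)}{r+\mu},
$$
which is exactly $\rho\,U(r+\mu)/U'(r+\mu)\geq r+\mu$ — no Herglotz bound needed, and no constraint on $r$ or $\rho$ beyond the hypotheses. The paper makes this same normalization do all the work, but in one step: since $Z$ is concave with $Z(-r)=0$ and $Z(y)=1$, the secant comparison at $\mu<y$ gives $y=Z(\mu)>(r+\mu)/(r+y)$ directly, i.e.\ $y^2+ry-(r+\mu)>0$, hence $y>y_-$. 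Your tangent-line detour is longer (it needs the auxiliary monotonicity of $h$) and, as written, does not close; with the $\phi(-r)=0$ observation it becomes a correct but more roundabout version of the paper's secant argument.
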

\begin{proof}
Consider the function
$$f(y)=y^{\rho}U(r+y)-U(r+\mu).$$

We have
$$f(\mu)=\mu^\rho U(r+\mu)-U(r+\mu)<0, \quad f(1)=U(r+1)-U(r+\mu)>1$$
we have use that $0<\mu <1$ and $U$ is an increasing function. Therefore, $f$ has a zero in $(\mu,1)$. Furthermore, for any $y>0$
$$f'(y)=\rho y^{\rho-1} U(r+y)+y^\rho U'(r+y)>0,$$
$f$ is a monotone increasing function, and its zero in $(\mu,1)$ is unique.

To demonstrate the last claim of the Lemma, notice, that the function $Z$ is concave whenever $U \in \Omega^r_{<\sigma}(J_U)$, therefore
$$y=Z(\mu) > Z(y) {r+\mu \over r+y } ={r+\mu \over r+y}$$
(notice $Z(-r)=0$, $Z(y)=1$). The solution of this quadratic inequality yield the lower bound $(\ref{y-})$.
\end{proof}

Next, observe, that 
\begin{align*}
N_{\Phi_{U,\mu,r}}(x)&= { \Phi_{U,\mu,r}''(x)  \over \Phi_{U,\mu,r}'(x)}= -\mu  \left( {Z''(Z(\mu(1-x))) \over  Z'(Z(\mu(1-x)))}  Z'(\mu(1-x))+  {Z''(\mu(1-x)) \over  Z'(\mu(1-x))}     \right)\\
&= -\mu \left\{ N_Z(Z(\mu(1-x)))  Z'(\mu(1-x)) +N_Z(\mu(1-x))
\right\}.
\end{align*}
This implies, that whenever $U \in \Omega^r_{<\sigma}(J_U)$ for some $\sigma<0$, the function 
$$\Phi_{U,\mu,r}(x)=1-Z(Z(\mu(1-x))),$$
has \emph{positive} nonlinearity and is in $\Omega(J_\Phi)$, where
$$J_\Phi=\left(1-{y \over \mu \lambda_+} ,1+{r \over \mu} \right).$$
In particular, the analyticity of $\Phi_{U,\mu,r}$ on $\field{C}_{J_\Phi}$  follows from the fact that $Z(\mu(1-x))$ maps the interval $J_\Phi$  to $\left(Z\left(y /\lambda_+\right),0\right)$, where
 $$Z\left( {y \over \lambda_+} \right) \le {r+{y \over \lambda_+} \over r+y} \le {1 \over \lambda_+},$$
the first inequality following from concavity. Therefore, $\left(Z( y / \lambda_+),0\right)$ is contained in the domain of analyticity of $Z$. 

At the same time, the function  
$$\Psi_{V,\lambda,r}(x)=r-r W(\lambda(r-x))$$
has positive nonlinearity and is in $\Omega(J_{\Psi})$, where
$$J_\Psi=\left(r-{r \over \lambda \mu_+},r+{1\over \lambda} \right).$$ 

We are now ready to prove the following Lemma:

\begin{lemma}\label{scalings}
Let $(U,V) \in \Omega^r_{<\sigma}(J_U) \times \Omega^1_{<\gamma}(J_V)$ for some $\sigma>0$ and $\gamma>0$. 

Then, for every $r \in (0,1)$, the equations $(\ref{lambda_eq})$ and $(\ref{mu_eq})$ have a unique solution $(\lambda,\mu)$ in the set
\[
  \left(\lambda_-(r),\lambda_+(r)\right) \times \left(\mu_-(r),\mu_+(r)\right),
\]
where $\lambda_+(r)$ and $\mu_+(r)$ are as in $(\ref{lambda-mu-plus})$, and
\begin{align*}
\lambda_-(r) &= \left({r \over \rho} { 1-\sqrt{\mu_+}\lambda_+  \over  (\lambda_+ r +1  )  ( 1 +\sqrt{\mu_+} \lambda_+^2 r)  } \right)^{1 \over \rho-1} \\
\mu_-(r) &=  \left( {y_- r^2 \over \rho^2} {\left( 1  -\lambda_+  \mu_+ \right)^2 \over  ( r +1)  (r+\mu_+)  ( r  + \lambda_+ \mu_+) (r+ \lambda_+ \mu_+^2) } \right)^{1  \over \rho-1}.
\end{align*}

Furthermore, the map $(U,V) \mapsto (\lambda,\mu)$ is continuous from $\Omega^r_{<\sigma}(J_U) \times \Omega^1_{<\gamma}(J_V)$ to $\lambda_-(r),\lambda_+(r)) \times (\mu_-(r),\mu_+(r))$.

\end{lemma}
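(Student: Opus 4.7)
The plan is: (i) reduce both fixed-point conditions to scalar zero-finding problems in $\lambda$ and $\mu$; (ii) prove that each resulting scalar function is strictly monotone on the positive axis via the negative-nonlinearity hypothesis; (iii) locate the unique zero inside the prescribed interval by matching the Herglotz--Pick inequality $(\ref{first_der})$ and the concavity bounds implicit in $\Omega^r_{<\sigma}$, $\Omega^1_{<\gamma}$ to the explicit formulas for $\lambda_\pm,\mu_\pm$; and (iv) deduce continuity of $(U,V)\mapsto(\lambda,\mu)$ from simplicity of the zero plus compactness of the ambient function space. The step I expect to be hardest is the upper-endpoint sign $G_U(\mu_+)>0$ in (iii), which couples the three points $r+y$, $r+\mu$, and $y$ through the implicit equation $y^\rho U(r+y)=U(r+\mu)$.

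\emph{Reduction.} A direct differentiation of $(\ref{zw})$ yields $W'(\lambda r)=\rho^{-1}V'(\lambda r+1)/V(\lambda r+1)$, so $(\ref{lambda_eq})$ reads $F_V(\lambda)=0$ with
$$F_V(\lambda):=\lambda^{\rho-1}-\frac{r}{\rho}\frac{V'(\lambda r+1)}{V(\lambda r+1)}.$$
Using the normalisation $Z(y)=1$ one obtains $Z'(y)=\rho^{-1}U'(r+y)/U(r+y)$ and $Z'(\mu)=(y/\rho)U'(r+\mu)/U(r+\mu)$, whence $(\ref{mu_eq})$ reads $G_U(\mu)=0$ with
$$G_U(\mu):=\mu^{\rho-1}-\frac{y(\mu)}{\rho^{2}}\frac{U'(r+y(\mu))}{U(r+y(\mu))}\frac{U'(r+\mu)}{U(r+\mu)},$$
where $y(\mu)$ is the function furnished by the preceding lemma.

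\emph{Monotonicity.} Factor $W(z)=\tilde W(z)d(\lambda)$ and $Z(z)=\tilde Z(z)c(y)$ with $\tilde W(z):=V(1+z)^{1/\rho}$, $d(\lambda):=V(1+\lambda r)^{-1/\rho}$, $\tilde Z(z):=U(r+z)^{1/\rho}$, $c(y):=U(r+y)^{-1/\rho}$. The assumption $V\in\Omega^1_{<\gamma}(J_V)$ is precisely that the nonlinearity of $\tilde W$ is $<\gamma<0$, so $\tilde W$ is strictly concave and $\tilde W'>0$ is strictly decreasing; analogously $\tilde Z'$ is strictly decreasing. Since $d$ and $c$ are also positive and strictly decreasing in their arguments (as $V,U$ are increasing) and $y(\mu)$ is strictly increasing by implicit differentiation of $y^\rho U(r+y)=U(r+\mu)$, the products $r\tilde W'(\lambda r)d(\lambda)$ and $\tilde Z'(y(\mu))\tilde Z'(\mu)c(y(\mu))^{2}$ appearing on the right-hand sides of $F_V$, $G_U$ are each strictly decreasing in the relevant variable; combined with the fact that $\lambda^{\rho-1}, \mu^{\rho-1}$ are strictly increasing for $\rho>1$, this shows $F_V$ and $G_U$ are strictly increasing, so each admits at most one zero on $\fR_{>0}$.

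\emph{Endpoint signs.} The formulas for $\lambda_-(r),\mu_-(r)$ are precisely those produced by substituting the lower Herglotz bound $(\ref{first_der})$ into $F_V,G_U$ at the worst-case arguments ($\lambda=\lambda_+$, respectively $y=1$ and $\mu=\mu_+$), using the values $a_V=1/(\lambda_+\sqrt{\mu_+})-1$ and $a_U=r/(\lambda_+\mu_+)-r$ read off from $(\ref{Js})$ together with the bound $y\ge y_-$ from the preceding lemma. Since $V'/V$ and $U'/U$ are decreasing in their arguments, these bounds persist at $\lambda=\lambda_-$ and $\mu=\mu_-$, giving $F_V(\lambda_-)<0$ and $G_U(\mu_-)<0$. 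For the upper endpoints the Herglotz upper bound is insufficient when $r$ is small, but concavity of $\tilde W$ with $\tilde W(-1)=0$ yields the sharper chord bound $V'(\xi)/V(\xi)\le\rho/\xi$; using $\lambda_+^{\rho}=r/(r+1)$ the inequality $F_V(\lambda_+)>0$ collapses to $\lambda_+<1$, which is immediate. The analogous bound $U'(\xi)/U(\xi)\le\rho/\xi$, combined with $y(\mu_+)<1$ (so that $y/(r+y)<1/(r+1)$) and $\mu_+^{\rho}=1/(r+1)^{2}$, collapses $G_U(\mu_+)>0$ to $\mu_+<1$. The intermediate value theorem together with the strict monotonicity of Step~(ii) then yields unique zeros of $F_V$ in $(\lambda_-,\lambda_+)$ and of $G_U$ in $(\mu_-,\mu_+)$.

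\emph{Continuity.} $\Omega(J_U)\times\Omega(J_V)$ is a compact metric space under uniform convergence on compacta of $\fC_{J_U}\times\fC_{J_V}$; Cauchy's integral formula transfers this continuity to the derivatives $U',U'',V',V''$, so $F_V,G_U$ and their $\lambda$- and $\mu$-derivatives depend continuously on $(U,V)$. Strict monotonicity from Step~(ii) makes the zero simple, so the implicit function theorem yields continuity of $(U,V)\mapsto(\lambda,\mu)$, as required.
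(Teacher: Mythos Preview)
Your proof is correct and follows the same four-step skeleton as the paper (scalar reduction, monotonicity, endpoint signs, continuity), but two of the steps are carried out by genuinely more elementary means. For monotonicity, the paper computes $\partial_\lambda\bigl(rW'(\lambda r)\bigr)$ and $\partial_\mu\bigl(Z'(y)Z'(\mu)\bigr)$ explicitly, tracking how the $\lambda$- and $\mu$-dependent normalisations interact with the second derivatives of $V$ and $U$; your factorisations $rW'(\lambda r)=r\tilde W'(\lambda r)\,d(\lambda)$ and $Z'(y)Z'(\mu)=\tilde Z'(y)\tilde Z'(\mu)\,c(y)^{2}$ separate off those normalisations and reduce the claim to the observation that a product of positive strictly decreasing factors is strictly decreasing, which bypasses the derivative bookkeeping. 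For the upper endpoints $F_V(\lambda_+)>0$ and $G_U(\mu_+)>0$, the paper invokes the Schwarz Lemma~\ref{Schwarz_lemma} after using convexity of $\Psi,\Phi$ to set up the required slit-plane mapping; your concavity chord bound $\tilde W'(z)\le\tilde W(z)/(z+1)$ (from $\tilde W$ concave with $\tilde W(-1)=0$) yields the identical numerical inequality $\Psi'_{V,\lambda,r}(0)\le\lambda r/(\lambda r+1)$, and the analogous bound for $\tilde Z$ gives $\Phi'_{U,\mu,r}(0)\le\mu y/\bigl((r+y)(r+\mu)\bigr)$, by purely real-variable arguments --- the complex-analytic tool is not needed here. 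One small remark: your phrase ``since $V'/V$ and $U'/U$ are decreasing'' in the lower-endpoint step is not an independent hypothesis but a consequence of your monotonicity step (indeed $rW'(\lambda r)=(r/\rho)V'(\lambda r+1)/V(\lambda r+1)$); alternatively, as the paper does, it suffices that the Herglotz lower bound itself is a decreasing function of $\lambda$, which already pushes the estimate from $\lambda_+$ down to $\lambda_-$.
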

\begin{proof}
One can obtain the lower bounds on $\Psi_{V,\lambda,r}'(0)$ and
$\Phi_{U,\mu,r}'(0)$ straightforwardly from \eqref{lambda_eq}, \eqref{mu_eq}
and $(\ref{first_der})$.
\begin{align}
\label{lambda-min} \Psi_{V,\lambda,r}'(0) &\ge  {\lambda r \over \rho} { {1 \over \lambda_+ \sqrt{\mu_+}}-1  \over  ( \lambda r +1  )  ( {1 \over \lambda \sqrt{\mu_+}} +\lambda r)  },\\
\label{mu-min}  \Phi_{U,\mu,r}'(0) &\ge \mu {y \over \rho^2} { \left( {r \over \lambda_+ \mu_+} -r \right)^2\over  ( r +\mu  ) (r+y)  ( {r \over \lambda_+ \mu_+} +\mu)  ( {r \over \lambda_+ \mu_+} + y) }\\
\nonumber &\ge \mu {y_- \over \rho^2} { \left( r -r \lambda_+ \mu\_+ \right)^2\over  ( r +\mu_+  ) (r+1)  ( r  +\mu_+^2 \lambda_+)  ( r + \lambda_+ \mu_+) }.
\end{align}


On the other hand, we can use the Schwarz Lemma $\ref{Schwarz_lemma}$ to bound $\Psi_{V,\lambda,r}'(0)$ and $\Phi_{U,\mu,r}'(0)$ from above.  First, notice, that since the nonlinearities, and hence the second derivatives, of $\Psi_{V,\lambda,r}$ and $\Phi_{U,\mu,r}$ are  positive $\Psi'_{V,\lambda,r}(t) < \Psi'_{V,\lambda,r}(0)$ and  $\Phi'_{U,\mu,r}(t) < \Phi'_{U,\mu,r}(0)$ for all negative $t$ in the domain of these functions, and
\begin{align*}
\Psi_{V,\lambda,r}(-t) &>-t \Psi'_{V,\lambda,r}(0),& r-{r \over \lambda \mu_+}
&< -t <0, \\ 
\Phi_{U,\mu,r}(-t) &>-t \Phi'_{U,\mu,r}(0),& 1-{y \over \mu \lambda_+} &< -t <0. 
\end{align*}

Therefore,
\begin{align*}
\Psi_{V,\lambda,r} &: \left(-t,r+{1\over \lambda}\right) \mapsto \left(-t \Psi_{V,\lambda,r}'(0) ,r\right), \\
\Phi_{U,\mu,r}&: \left(-t,1+{r\over \mu}\right) \mapsto \left(-t \Phi_{U,\mu,r}'(0), 1- {y r \over r+\mu} \right) \subset \left(-t \Phi_{U,\mu,r}'(0), {y \over r+y} \right),
\end{align*}
where  we have used the concavity of $Z$ to get 
\begin{multline*}
Z(0) > Z(\mu) {r \over r+\mu} =y {r \over r+\mu}> Z(y) {r \over r+y}={r \over r+y} \implies \\
\Phi_{U,\mu,r}\left(1+{r \over \mu}\right) = 1-Z(0) \le 1-{r \over r+y}={y \over r+y}.
\end{multline*}

We can now use $(\ref{deriv})$ to find upper bounds on $\Psi_{U,\mu,r}'(0)$ and $\Phi_{V,\lambda,r}'(0)$:
\begin{align*}
\Psi_{V,\lambda,r}'(0) \le {t  \Psi_{V,\lambda,r}'(0) r \left( r+{1 \over \lambda} +t \right)    \over t \left(r+{1 \over \lambda}  \right) \left( t  \Psi_{V,\lambda,r}'(0) +r\right)  } &\implies \Psi_{V,\lambda,r}'(0) \le {\lambda r \over \lambda r+1} \\
\Phi_{U,\mu,r}'(0) \le {t  \Phi_{U,\mu,r}'(0) {y \over r+y}  \left( 1+{r \over
\mu} +t \right)    \over t \left(1+{r \over \mu}  \right) \left( t
\Phi_{U,\mu,r}'(0) +{y \over r+y} \right)  } &\implies \Phi_{U,\mu,r}'(0) \le {y \over r+y }{\mu  \over r+\mu}.
\end{align*}

Consider solutions of the equations $\lambda^\rho=\Psi_{V,\lambda,r}'(0)$ with
the upper and lower bounds on $\Psi_{V,\lambda,r}'(0)$ substituted for the right hand side:
\begin{align}
\nonumber \lambda^{\rho}={\lambda r \over \rho} { {1 \over \lambda_+ \sqrt{\mu_+}}-1  \over  ( \lambda r +1  )  ( {1 \over \lambda \sqrt{\mu_+}} +\lambda r)  } &\implies   \lambda \ge \lambda_-(r)=\left({r \over \rho} { 1-\sqrt{\mu_+}\lambda_+  \over  (\lambda_+ r +1  )  ( 1 +\sqrt{\mu_+} \lambda_+^2 r)  } \right)^{1 \over \rho-1} \\
\label{lambda_plus} \lambda^\rho = {\lambda r \over \lambda r+1} &\implies \lambda \le \lambda_+(r)=\left(r \over r+1 \right)^{1 \over \rho}. 
\end{align}

The function 
$$f(\lambda,r)=\lambda^{\rho-1}-{1 \over \lambda} \Psi_{V,\lambda,r}'(0)=\lambda^{\rho-1}-r  W'(\lambda r)$$
satisfies $f(\lambda_+,r) \ge 0$ and $f(\lambda_-,r) \le 0$, and 
$$\partial_\lambda f(\lambda,r)=(\rho-1) \lambda^{\rho-2} -r \partial_\lambda W'(\lambda r).$$
We have 
\begin{align*}
W'(x)&= {1 \over \rho} {1 \over V(\lambda r+1)^{1 \over \rho}} V'(x+1) V(x+1)^{{1\over \rho}-1}, \\
W'(\lambda r)&= {1 \over \rho} { V'(\lambda r +1) \over  V(\lambda r +1) }, \\
\partial_\lambda W'(\lambda r)&= {r \over \rho} \left( { V''(\lambda r +1) \over  V(\lambda r +1) } - { V'(\lambda r +1)^2 \over  V(\lambda r +1)^2 } \right),
\end{align*}
while
\begin{align*}
W''(x)&= {1 \over \rho} {1 \over V(\lambda r+1)^{1 \over \rho}} \left(  V''(x+1) V(x+1)^{{1\over \rho}-1}+\left( {1 \over \rho} -1\right) V'(x+1)^2  V(x+1)^{{1\over \rho}-2} \right), \\
W''(\lambda r)&= {1 \over \rho}\left(  {V''(x+1) \over V(x+1) } +\left( {1 \over \rho} -1 \right) { V'(x+1)^2 \over  V(x+1)^2} \right).
\end{align*}

Therefore $\partial_\lambda W'(\lambda r)< r W''(\lambda r)<0,$ since the nonlinearity, and hence the second derivative,  of $W$ is  negative. It follows that $f$ is monotone and  has a unique zero in the interval $(\lambda_-,\lambda_+)$. Continuity of $\lambda$ in $V$ follows from the fact that $f$ is continuous in $V$.

Similarly, the function
\begin{equation}\label{func_g}
g(\mu,r)=\mu^{\rho-1}-{1 \over \mu} \Phi_{U,\mu,r}'(0)=\mu^{\rho-1}-Z'(y) Z'(\mu)
\end{equation}
has a zero in the interval $(\mu_-,\mu_+)$, where
\begin{align}
 \label{mu_min} \mu_- &= \left( {y_- r^2 \over \rho^2} {\left( 1  -\lambda_+
 \mu_+ \right)^2 \over  ( r +1)  (r+\mu_+)  ( r  + \lambda_+ \mu_+) (r+
 \lambda_+ \mu_+^2) } \right)^{1  \over \rho-1}, \\
\label{mu_plus} \mu_+ &= \left(  {1 \over (r+1)^2 } \right)^{1 \over \rho}.
\end{align}

We will now show that this zero is unique. First,
\begin{equation}
\label{ZZ} \partial_\mu g(\mu,r)=(\rho-1) \mu^{\rho-2}-\left(\partial_\mu Z'(y) \right) Z'(\mu)-Z'(y) \partial_\mu Z'(\mu)
\end{equation}

Next,
\begin{multline*}
y^\rho = {U(r+\mu) \over U(r+y)} \implies \\
\partial_\mu y = { U'(r+\mu) \over \rho y^{\rho-1} U(r+y) +y^\rho U'(r+y)} \le { U'(r+\mu) \over \rho y^{\rho-1} U(r+y)} ={1\over \rho} y {U'(r+\mu) \over U(r+\mu)}
\end{multline*}

We  use this bound in an estimate on $\partial_\mu Z'(\mu)$ in the third line below:
\begin{align*}
Z'(x)&= {1 \over \rho} {1 \over U(r+y)^{1 \over \rho}} U'(r+x) U(r+x) ^{{1 \over \rho} -1},\\
Z'(\mu)&= {1 \over \rho} y {U'(r+\mu) \over U(r+\mu)},\\
Z''(x)&= {1 \over \rho} {1 \over U(r+y)^{1 \over \rho}} \left(U''(r+x) U(r+x) ^{{1 \over \rho} -1}+\left({1 \over \rho}-1  \right) U'(r+x)^2 U(r+x) ^{{1 \over \rho} -2} \right) ,\\
\partial_\mu Z'(\mu)&= {1 \over \rho} \left(\partial_\mu y \right){U'(r+\mu) \over U(r+\mu)} +{1 \over \rho} y \left({U''(r+\mu) \over U(r+\mu)} -{U'(r+\mu)^2 \over U(r+\mu)^2}  \right) \\
&\le {1\over \rho^2} y {U'(r+\mu)^2 \over U(r+\mu)^2} +{1 \over \rho} y \left({U''(r+\mu) \over U(r+\mu)} -{U'(r+\mu)^2 \over U(r+\mu)^2}  \right) \\
&\le y Z''(\mu)<0.
\end{align*}

At the same time
\begin{equation*} 
\partial_\mu Z'(y) = \left(\partial_\mu y \right){1 \over \rho} \left({U''(r+y) \over U(r+y)} -{U'(r+y)^2 \over U(r+y)^2}  \right) \le  \left(\partial_\mu y \right) Z''(y) <0.
\end{equation*}

Therefore, the right hand side of $(\ref{ZZ})$ is positive, and $g$ is a monotone increasing function. The zero of $g$ in $(\mu_-,\mu_+)$ is unique. The fact that the map $U \mapsto \mu$ is continuous follows from the continuity of the function $g$ (see $(\ref{func_g})$ ) in $U$.
\end{proof}

We will now demonstrate that the unique solutions of $(\ref{lambda_eq})$ and
$(\ref{mu_eq})$ have to satisfy $\mu>\lambda$ for sufficiently small $r$, and
$\mu<\lambda$ for sufficiently large~$r$.

\begin{lemma}\label{r-interval}
For every $\rho>1$ there exist ${r_+}={r_+}(\rho)>{r_-}={r_-}(\rho)>0$, such that the unique solution $(\lambda,\mu)$ of $(\ref{lambda_eq})$ and $(\ref{mu_eq})$ satisfy $\mu>\lambda$ for all $r<{r_-}$, and $\lambda>\mu$ for all $r>{r_+}$. 
\end{lemma}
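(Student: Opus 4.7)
My plan is to analyze the solutions of $(\ref{lambda_eq})$ and $(\ref{mu_eq})$ at the two extremes of the admissible range of $r$ and then close by the continuity in $r$ provided by Lemma \ref{scalings}. As $r\to 0^+$ the bound $\lambda(r,V) \le \lambda_+(r) = (r/(r+1))^{1/\rho}$ tends to zero uniformly in $V$, so for the first half it suffices to exhibit a constant $m=m(\rho)>0$ with $\mu(r,U) \ge m$ for all $U \in \Omega_{<\sigma}^r(J_U)$ and all $r$ sufficiently small; then one picks $r_-$ with $\lambda_+(r_-) < m$ and the inequality $\mu>\lambda$ holds automatically on $(0,r_-)$.

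To produce this lower bound on $\mu$, I would rewrite the fixed point equation $\mu^{\rho-1} = Z'(y)Z'(\mu) = y\,U'(r+y)U'(r+\mu)/(\rho^2 U(r+y)U(r+\mu))$ using the matching identity $y^\rho = U(r+\mu)/U(r+y)$ to obtain the equivalent form
\[
\frac{U'(r+y)}{U(r+y)}\cdot\frac{U'(r+\mu)}{U(r+\mu)} = \frac{\rho^2\mu^{\rho-1}}{y}.
\]
The right endpoint $b=r+1/\lambda_+(r)$ of $J_U$ blows up as $r\to 0$, so the Herglotz upper bound on each logarithmic derivative on the left sharpens to $(1+o(1))/(r+x)$. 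Combined with the inequality $y \ge \sqrt{\mu}$ (which follows from the concavity of $Z$ provided by $U\in\Omega_{<\sigma}^r$, via the chord estimate $y=Z(\mu) \ge (\mu+r)/(y+r)$ and hence $y^2\ge \mu$), this yields the upper bound $\mu \le \rho^{-2/\rho}(1+o(1))$. The matching lower bound uses the uniform negativity $N_Z \le \sigma < 0$: although the Herglotz lower constant $a\sim r^{1-1/\rho}$ itself vanishes, the uniform concavity of $Z$ forces the upper and lower Herglotz bounds on $U'/U$ to coincide up to $o(1)$ at the relevant scales, so the displayed identity becomes an asymptotic equality pinning $\mu$ at $\rho^{-2/\rho}+o(1)$. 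Consequently $\mu(r,U) \to \rho^{-2/\rho}>0$ uniformly as $r\to 0^+$, and any $m<\rho^{-2/\rho}$ works for $r$ close enough to zero.

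The opposite direction is handled by a completely symmetric argument applied to the equation $\lambda^{\rho-1} = rW'(\lambda r)$: the bound $\mu \le \mu_+(r) = (r+1)^{-2/\rho}$ forces $\mu$ to zero as $r$ grows through the admissible range, while the uniform concavity of $W$ implied by $V\in\Omega_{<\gamma}^1(J_V)$ yields a uniform positive lower bound $\ell(\rho)$ on $\lambda$; one then selects $r_+$ with $\mu_+(r_+)<\ell(\rho)$. The single hard step throughout is this uniform lower bound on $\mu$ (and its mirror statement for $\lambda$): the naive Herglotz lower bound on $U'/U$ degenerates as $r\to 0$, and the rescue is precisely the uniform negativity of the nonlinearity that was highlighted in the introduction as the indispensable hypothesis of the entire scheme. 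With both one-sided statements in hand, the continuity of $r\mapsto \mu(r,U)-\lambda(r,V)$ from Lemma~\ref{scalings} completes the argument.
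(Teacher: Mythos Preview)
Your large-$r$ argument is essentially the paper's: $\mu\le\mu_+(r)=(r+1)^{-2/\rho}\to 0$ while the explicit lower bound $\lambda_-(r)$ from Lemma~\ref{scalings} gives $\lambda^{\rho-1}\ge\frac{1}{2\rho}(1+O(1/r))$, hence $\lambda>\mu$ for $r$ large.

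The small-$r$ half, however, has a genuine gap. Your central claim that $\mu(r,U)\to\rho^{-2/\rho}$ \emph{uniformly} in $U$ is false. First, a ``uniform'' bound $N_Z\le\sigma<0$ with $\sigma$ independent of $r$ is vacuous: by the Herglotz estimate $(\ref{second_der})$ one always has $N_Z(1/\lambda_+)\ge -2/(r+1/\lambda_+)\to 0$ as $r\to 0$, so for any fixed $\sigma<0$ the class $\Omega^r_{<\sigma}(J_U)$ is empty once $r$ is small. Second, even granting merely $N_Z<0$, the conclusion fails: take $U(x)=a x/(a+x)$ with $a=r/(\lambda_+\mu_+)-r\sim r^{1-1/\rho}$, the extremal Herglotz--Pick function for the lower bound on $J_U$. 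One checks $N_Z<0$, yet $U'(r+\mu)/U(r+\mu)=a/((r+\mu)(a+r+\mu))\to 0$ whenever $\mu$ stays bounded away from $0$, forcing the right-hand side of your displayed identity to $0$ while the left-hand side is $\rho^2\mu^{\rho-1}/y\ge\rho^2\mu^{\rho-1}>0$. Hence for this admissible family $\mu(r)\to 0$. The assertion that ``concavity of $Z$ forces the upper and lower Herglotz bounds on $U'/U$ to coincide'' is simply not true; concavity of $Z$ controls $Z''/Z'$, not the size of $U'/U$.

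What the paper actually does for small $r$ is compare \emph{rates} of decay rather than produce a positive floor. From $\lambda^\rho\le\lambda r/(\lambda r+1)$ one gets $\lambda^{\rho-1}\le r$, so $\lambda=O(r^{1/(\rho-1)})$. On the other side, the explicit lower bound $\mu_-(r)$ of Lemma~\ref{scalings} (which is exactly the Herglotz lower bound you discarded) yields, after tracking the powers of $r$, $\mu\ge C\,r^{1/(\rho+1)}$ for $\rho<2$ and $\mu\ge C\,r^{(2\rho-3)/((\rho-1)(\rho+1))}$ for $\rho\ge 2$. In either case the exponent is strictly smaller than $1/(\rho-1)$, so $\mu\gg\lambda$ as $r\to 0$. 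You should replace the claimed positive limit of $\mu$ by this power-law comparison.
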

\begin{proof}
First, we look at small $r$'s.

According to the formula $(\ref{lambda_plus})$,
\begin{equation}
\label{ineq1} \lambda^\rho r +\lambda^{\rho-1}<r,
\end{equation}
and $\lambda=O(r^{1 \over \rho})$. These two facts, in turn, imply that the first term in $(\ref{ineq1})$ is $O(r^2)$, the second --- $O(r^{{\rho-1 \over \rho}})$, and consequently, for small $r$ the inequality $(\ref{ineq1})$ becomes
$$\lambda^{\rho-1}<C r \implies \lambda=O(r^{1 \over  \rho-1}),$$
($C$ here and  below will denote an irrelevant  constant, not necessarily one and the same). At the same time, according to $(\ref{mu-min})$
\begin{align*}
\mu^{\rho-1} &\ge  C {y \over r+y } { r^2 \left( 1 - \lambda_+ \mu_+ \right)^2 \over  ( r +\mu  )  ( r  +\lambda_+ \mu_+ \mu )  ( r + \lambda_+ \mu_+ y) } \\
&\ge C {y \over r+y } { r^2 \over  ( r +\mu  )^2 ( r + O(r^{1 \over \rho-1}) ) }.
\end{align*}
Notice, that $y/(r+y)$ is an increasing function of $y$, therefore its minimum is achieved at $y_-$. For small $r$, $y_-=O(\sqrt{r+\mu})$, i.e., for small $r$, $y_-/(r+y_-)=O(1)$.
$$\mu^{\rho-1} \ge  C  { r^2 \over  ( r +\mu  )^2 ( r + O(r^{1 \over \rho-1}) ) }.$$

We consider two cases $\rho<2$ and $\rho \ge 2$. In the first case
\begin{multline*}
\mu^{\rho-1} \ge  C  { r^2 \over  ( r +\mu  )^2 ( r + O(r^{1 \over \rho-1}) ) } \ge C  { r \over  ( r +\mu  )^2} \implies \\
\mu^{{\rho-1 \over 2}}(r+\mu) \ge C r^{1\over 2} \implies  \mu \ge O(r^{1 \over \rho+1}).
\end{multline*}
in the second case
\begin{multline*}
\mu^{\rho-1} \ge  C  { r^2 \over  ( r +\mu  )^2 ( r + O(r^{1 \over \rho-1}) ) } \ge C  { r^{2-{1 \over \rho-1}} \over  ( r +\mu  )^2} \implies \\
\mu^{{\rho-1 \over 2}}(r+\mu) \ge C r^{1-{1  \over 2(\rho-1)}} \implies  \mu \ge O(r^{2 \rho -3 \over (\rho-1)(\rho+1)}).
\end{multline*}

In both cases, for sufficiently small $r$, $\mu>\lambda$.

We will now look at large $r$. First, consider $(\ref{lambda-min})$ for large $r$:
\begin{align*}
\lambda^{\rho-1} &\ge { r \over \rho} { 1 -\lambda_+ \sqrt{\mu_+}  \over  ( \lambda_+ r +1  )  ( 1 +\lambda_+ \sqrt{\mu_+} r )  } \ge   {1 \over \rho} {r \left(1+O\left({1 \over r} \right) \right) \over (\lambda_+ r +1)\left(2+O\left({1 \over r} \right) \right)}\\
&\ge  {1 \over \rho} {r \left( 1+O\left({1 \over r} \right) \right)  \over \left(  \left(1+O\left({1\over r} \right)\right)r +1\right)\left(2+O\left({1 \over r} \right)\right) }  \ge {1  \over 2 \rho } \left(1+O\left({1 \over r} \right)\right).
\end{align*}

On the other hand, $\mu_+=O\left( {1\over r^2}\right)$ (cf.~$(\ref{mu_plus})$). Therefore, for sufficiently large $r$,  $\lambda>\mu$.
\end{proof}

\section{Bounded nonlinearity}

We will now look at the images of the nonlinearities $N_Z$ and $N_W$ under the
operator $\cT_r$. Let $(U,V) \in \Omega^r_{<\sigma}(J_U) \times \Omega^1_{<\gamma}(J_V)$ for some negative $\sigma$ and $\gamma$. Then, the equations $(\ref{lambda_eq})$ and $(\ref{mu_eq})$ have a unique solution $(\lambda,\mu)$. Denote
\[
  (\tilde{U},\tilde{V}) \equiv \cT_r(U,V)= \left( \lambda^{-\rho} U \circ
  \Psi_{V,\lambda,r},  \mu^{-\rho} V \circ \Phi_{U,\mu,r}\right).
\]
Also, for brevity, denote $p_{1 \over \rho} U=U_\rho$ and $p_{1 \over \rho} V=V_\rho$, then 
\begin{align*}
N_{\tilde{Z}}(x)&= {  \left( \tilde{U}_\rho(r+x)  \right)''  \over   \left( \tilde{U}_\rho(r+x)  \right)'}=  {  \left(  U_\rho\left(\Psi_{V,\lambda,r}(r+x)\right)\right)''  \over   \left( U_\rho\left(\Psi_{V,\lambda,r}(r+x) \right) \right)' }=  {  \left(  U_\rho\left(r+\hat{\Psi}_{V,\lambda,r}(x) \right) \right)'' \over    \left(  U_\rho\left(r+\hat{\Psi}_{V,\lambda,r}(x)\right) \right)'  }  \\
&=  {  U_\rho''\left(r+\hat{\Psi}_{V,\lambda,r}(x) \right) \hat{\Psi}_{V,\lambda,r}'(x)^2+ U_\rho'\left(r+\hat{\Psi}_{V,\lambda,r}(x) \right) \hat{\Psi}_{V,\lambda,r}''(x) \over   U_\rho'\left(r+\hat{\Psi}_{V,\lambda,r}(x)\right) \hat{\Psi}_{V,\lambda,r}'(x)       }  \\
&=  N_Z\left( \hat{\Psi}_{V,\lambda,r}(x) \right) \hat{\Psi}_{V,\lambda,r}'(x)+ {\hat{\Psi}_{V,\lambda,r}''(x) \over \hat{\Psi}_{V,\lambda,r}'(x)}  \\
&=  N_Z\left( \hat{\Psi}_{V,\lambda,r}(x) \right) \hat{\Psi}_{V,\lambda,r}'(x)+ N_{\Psi_{V,\lambda,r}}(r+x).
\end{align*}
where $\hat{\Psi}_{V,\lambda,r}(x)={\Psi}_{V,\lambda,r}(r+x)-r.$

\begin{align*}
N_{\tilde{W}}(x)&= {  \left( \tilde{V}_\rho(1+x)   \right)''  \over   \left(  \tilde{V}_\rho(1+x)   \right)'}=  {  \left( V_\rho\left(\Phi_{U,\mu,r}(1+x)\right) \right)''  \over   \left( V_\rho\left(\Phi_{U,\mu,r}(r+x) \right) \right)' }=  {  \left( V_\rho \left(r+\hat{\Phi}_{U,\mu,r}(x)\right) \right)'' \over    \left(  V_\rho \left(r+\hat{\Phi}_{U,\mu,r}(x)\right)  \right)'      } \\
&= {  V_\rho ''\left(r+\hat{\Phi}_{U,\mu,r}(x)\right) \hat{\Phi}_{U,\mu,r}'(x)^2 +  V_\rho '\left(r+\hat{\Phi}_{U,\mu,r}(x)\right) \hat{\Phi}_{U,\mu,r}''(x)  \over    V_\rho'\left(r+\hat{\Phi}_{U,\mu,r}(x)\right) \hat{\Phi}_{U,\mu,r}'(x)  } \\
&= N_W \left( \hat{\Phi}_{U,\mu,r}(x) \right) \hat{\Phi}_{U,\mu,r}'(x)+ {\hat{\Phi}_{U,\mu,r}''(x)  \over \hat{\Phi}_{U,\mu,r}'(x)}\\
&= N_W \left( \hat{\Phi}_{U,\mu,r}(x) \right) \hat{\Phi}_{U,\mu,r}'(x)+ N_{\Phi_{U,\mu,r}}(1+x).
\end{align*}
where $\hat{\Phi}_{U,\mu,r}(x)={\Phi}_{U,\mu,r}(1+x)-1.$

We will also require the following relation between the nonlinearity of $\Phi_{U,\mu,r}$ and~$Z$:
\begin{equation}
\label{nonl-relation}   N_{\Phi_{U,\mu,r}}(x)=-\mu \left\{ N_Z(Z(\mu(1-x))) Z'(\mu(1-x))+N_Z(\mu(1-x)) \right\}. 
\end{equation}

Recall, that for Herglotz--Pick functions
$$N'_f(x)={f'''(x)  \over f'(x)}-\left( {f''(x)  \over f'(x)}\right)^2  \ge {1 \over 2}  N_f(x)^2,$$
which follows from the positivity of the Schwarzian derivative. Therefore, the nonlinearity of these functions is monotone increasing.

\begin{prop}\label{inv-nonlinearity}
There exist functions $\Sigma(r)<0$ and $\Gamma(r)<0$, continuous in $r$, such that $\cT_r$ is a continuous operator of the set $\Omega^r_{<\Sigma(r)}(J_U) \times \Omega^1_{<\Gamma(r)}(J_V)$ into itself.
\end{prop}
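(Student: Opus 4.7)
My strategy is to combine the explicit formulas
\begin{align*}
N_{\tilde Z}(x) &= N_Z(\hat\Psi_{V,\lambda,r}(x))\,\hat\Psi_{V,\lambda,r}'(x) + N_{\Psi_{V,\lambda,r}}(r+x),\\
N_{\tilde W}(x) &= N_W(\hat\Phi_{U,\mu,r}(x))\,\hat\Phi_{U,\mu,r}'(x) + N_{\Phi_{U,\mu,r}}(1+x),
\end{align*}
derived just above the statement with the Herglotz bounds of Section~3 and the monotonicity of Herglotz--Pick nonlinearities. A short calculation gives $N_{\Psi_{V,\lambda,r}}(z) = -\lambda\,N_W(\lambda(r-z))$, which, together with \eqref{nonl-relation}, expresses both ``bad'' positive contributions $N_\Psi$ and $N_\Phi$ directly in terms of $N_W$ and $N_Z$. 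Since $\tilde U$ and $\tilde V$ remain Herglotz--Pick on $J_U$ and $J_V$ (being compositions of such functions with $\Psi$ and $\Phi$, which themselves inherit this property via $p_{1/\rho}$), the nonlinearities $N_{\tilde Z}$ and $N_{\tilde W}$ are monotone increasing on the real slice, and the invariance reduces to a single inequality at the right endpoint of each domain.

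The next step is to estimate $\hat\Psi_{V,\lambda,r}'$, $\hat\Phi_{U,\mu,r}'$, $N_\Psi$, $N_\Phi$ at those endpoints. The derivatives are bounded above and below using \eqref{first_der} and \eqref{ratio_1} applied to $W$ and $Z$, whose relevant intervals lie inside $J_W$ and $J_Z$. For $N_\Psi,N_\Phi$ the identities above give upper bounds in terms of $\Sigma,\Gamma$, provided one also has a \emph{lower} bound on $N_Z$ and $N_W$; the latter is supplied by $N_f(x) \ge -2/(a+x)$, a direct consequence of \eqref{second_der}, and is needed to control the worst case of very negative nonlinearity near the left endpoint of the domain. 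Assembling these yields estimates of the form
\begin{equation*}
\sup N_{\tilde Z} \le A_U(r)\,\Sigma + C_U(r,\Gamma),\qquad \sup N_{\tilde W} \le A_V(r)\,\Gamma + C_V(r,\Sigma),
\end{equation*}
where $A_U,A_V>0$ are lower bounds for $\hat\Psi',\hat\Phi'$ at the right endpoints, and $C_U,C_V\ge 0$ collect the positive contributions from $N_\Psi$, $N_\Phi$.

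To close the invariance I would choose $\Sigma(r),\Gamma(r)<0$ so that the right-hand sides become $\le\Sigma$ and $\le\Gamma$, equivalently $(A_U(r)-1)(-\Sigma)\ge C_U(r,\Gamma)$ and $(A_V(r)-1)(-\Gamma)\ge C_V(r,\Sigma)$. This requires first verifying $A_U(r),A_V(r)>1$ on some $({r_-},{r_+})$, a quantitative expression of the fact that the rescaled maps $\hat\Psi$ and $\hat\Phi$ are expanding on their fundamental domains for the combinatorics $(\{0,1\},\{1,0,0\})$, with the quantitative control coming from the $\lambda_\pm,\mu_\pm$ of Lemma~\ref{scalings}. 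The system then becomes a coupled monotone problem in the continuous negative pair $(\Sigma(r),\Gamma(r))$, which I would solve by a simple contraction or monotonicity argument in the sup norm, using that $C_U$ depends only mildly on $\Gamma$ (through the Herglotz lower bound on $N_W$) and similarly for $C_V$.

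Continuity of $\cT_r$ on the resulting invariant set then reduces to the continuity of $(U,V)\mapsto(\lambda,\mu)$ already established in Lemma~\ref{scalings}, combined with the continuity of composition on the compact space $\Omega(J_U)\times\Omega(J_V)$. The main obstacle is the coupled nature of the invariance: the estimate for $\Sigma$ involves $\Gamma$ through the $N_W$ inside $N_\Psi$, while that for $\Gamma$ involves $\Sigma$ through \emph{two} occurrences of $N_Z$ in $N_\Phi$ (coming from the self-composition $Z\circ Z$ in $\Phi_{U,\mu,r}$). Verifying that both inequalities close simultaneously with continuous negative functions $\Sigma(r),\Gamma(r)$ across $({r_-},{r_+})$ is where the essential estimation is concentrated.
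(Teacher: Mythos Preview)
Your overall setup matches the paper's: the composition formulas for $N_{\tilde Z}$, $N_{\tilde W}$, the identity $N_{\Psi_{V,\lambda,r}}(z)=-\lambda N_W(\lambda(r-z))$, the reduction to the right endpoint via monotonicity of Herglotz--Pick nonlinearities, and the continuity argument via Lemma~\ref{scalings} are all exactly what the paper does. The divergence is in how the invariance inequality is actually closed, and there your plan has a genuine gap.

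You propose to show $A_U(r),A_V(r)>1$ where $A_U$ is a lower bound for $\hat\Psi'$ at the fixed right endpoint $1/\lambda_+$, and then solve the coupled system in $(\Sigma,\Gamma)$. The paper explicitly avoids this route, remarking that at the fixed endpoint one is left with ``a complicated function of $r$ and $\lambda$'' whose sign is not tractable across the full range of $r$. There is no reason to expect a clean inequality $A_U>1$ uniformly. Instead the paper exploits that $\tilde Z$ is in fact analytic on the \emph{larger} interval $(-r/(\lambda\mu_+),1/\lambda)\supsetneq J_Z$ (because $\lambda<\lambda_+$), and evaluates $N_{\tilde Z}$ at an auxiliary point $1/l$ with $\lambda<l<\lambda_+$; since $N_{\tilde Z}$ is increasing, a bound at $1/l$ implies one at $1/\lambda_+$. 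The choice of $\Sigma,\Gamma$ is then made very close to the Herglotz extremals $\sigma_-=-2/(r+1/\lambda_+)$, $\gamma_-=-2/(1+r/\mu_+)$, specifically $\Sigma=-(2-(m-\mu))/(r+1/\lambda_+)$ and $\Gamma=-(2-(l-\lambda))/(1+r/\mu_+)$. With these choices the denominator $f(\Sigma,\Gamma,l)$ appearing in the bound for $N_{\tilde Z}(1/l)$ vanishes like $(l-\lambda)^2$, so the negative term behaves like $-C/(l-\lambda)^2$ while the positive term $-\lambda N_W(-\lambda/l)\le 2\lambda l/(l-\lambda)$ only blows up like $1/(l-\lambda)$. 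Taking $l$ (and similarly $m$) close enough to $\lambda$ (resp.\ $\mu$) makes the sum negative, and the resulting $\Sigma(r),\Gamma(r)$ depend only on $r$ through the small continuous functions $\delta(r)=l-\lambda$, $\epsilon(r)=m-\mu$.

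In short, the mechanism is not ``$A_U>1$ and then a contraction in $(\Sigma,\Gamma)$'', but rather a balancing of blow-up rates obtained by evaluating at a movable point and pushing $\Sigma,\Gamma$ to the edge of the admissible Herglotz range. Your proposal does not contain this idea, and without it the step ``verifying that both inequalities close simultaneously'' is precisely the hard part you have not addressed.
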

\begin{proof}

For a fixed $r$, suppose the nonlinearity of $Z$ is bounded by some negative $\sigma$  on all of $J_Z=(-r,1/\lambda_+)$, while that of $W$ is bounded by some $\gamma$ on $J_W=(-1,r/\mu_+)$:
\begin{equation}
\label{nonl-bounds} N_Z \left( {1 \over \lambda_+} \right) \le \sigma  <0,  \quad   N_W \left( {r \over \mu_+} \right) \le \gamma <0,
\end{equation}
Notice, due to the bounds $(\ref{second_der})$,
\begin{equation}
\label{eps-delt_-} \sigma \ge -{2 \over  r +{1\over \lambda_+}} \equiv \sigma_-, \quad \gamma \ge -{2 \over  1 +{r\over \mu_+}}\equiv \gamma_-.
\end{equation}

Let $(\lambda,\mu)$ be the unique solution of $(\ref{lambda_eq})$ and $(\ref{mu_eq})$. Below, we will assume a certain form of the bounds $\sigma$ and $\gamma$, and we will show that $N_{\tilde{Z}}(1/ \lambda_+)$ and $N_{\tilde{W}}(r /\mu_+)$ satisfy bounds of the same form.  In fact, we will estimate the maximum of $N_{\tilde{Z}}(1/l)$  for any $\lambda<l<\lambda_+$ and the maximum of $N_{\tilde{W}}(r/m)$ for any  $\mu<m<\mu_+$, and use the fact that $N_{\tilde{Z}}(1/l)>N_{\tilde{Z}}(1 /\lambda_+)$ and  $N_{\tilde{W}}(r/m)>N_{\tilde{W}}(r / \mu_+)$. The exact reason for why the nonlinearities are estimated at points $1/l>1/\lambda_+$ and $r/ m > r / \mu_+$ will be given at the end of Step 1).

\bigskip

\noindent \textit{Step 1).} We start with  $N_{\tilde{Z}}(1/l)$: 
$$N_{\tilde{Z}} \left({1 \over l }  \right)=N_Z\left(  \hat{\Psi}_{V,\lambda,r}\left( {1 \over l} \right)  \right) \hat{\Psi}_{V,\lambda,r}'\left({1 \over l} \right)+ N_{\Psi_{V,\lambda,r}}\left(r+{1 \over l} \right).$$

Since $\hat{\Psi}_{V,\lambda,r}$ is an  increasing function,
$$\hat{\Psi}_{V,\lambda,r}\left( {1 \over l} \right) \le \hat{\Psi}_{V,\lambda,r}\left( {1 \over \lambda} \right) = \Psi_{V,\lambda,r}\left(r+ {1 \over \lambda} \right)-r=0,$$ 
and 
$$N_{\tilde{Z}} \left({1 \over l }  \right) \le N_Z (0) \hat{\Psi}_{V,\lambda,r}'\left({1 \over l} \right)+ N_{\Psi_{V,\lambda,r}}\left(r+{1 \over l} \right).$$

Notice, that 
\begin{equation}
\nonumber  N_{\Psi_{V,\lambda,r}}(x)=-\lambda N_{W}(\lambda(r-x)),
\end{equation}
therefore,
\begin{equation}
\label{bound5}
N_{\Psi_{V,\lambda,r}}\left(r+{1 \over l} \right)=-\lambda  N_{W}\left(-{\lambda \over l}  \right), \quad N_{\Psi_{V,\lambda,r}}(0)=-\lambda N_{W}(\lambda r).
\end{equation}

The estimate $(\ref{pos_1})$ can be used to  bound $N_Z(0)$ and  $N_{W}(\lambda r)$ from  above:
\begin{equation}
\label{bound2} N_Z(0)  \le {2 N_Z\left( {1 \over \lambda_+} \right)  \over 2 + N_Z\left( {1 \over \lambda_+} \right)  {1 \over \lambda_+} }, \quad  N_W(\lambda r)  \le {2 N_W\left( {r  \over \mu_+} \right)  \over 2 + N_W\left( {r \over \mu_+} \right)  {\left( {r \over \mu_+}-\lambda r\right) }}.
\end{equation}

We also use the bound $(\ref{pos_2})$ to estimate $\hat{\Psi}_{V,\lambda,r}'\left({1 / l} \right)$ from below.
\begin{equation}
\label{bound4} \hat{\Psi}_{V,\lambda,r}'\left({1 \over l} \right)= \Psi_{V,\lambda,r}'\left(r+{1 \over l} \right) \ge {4 \lambda^\rho \over \left(2-N_{\Psi_{V,\lambda,r}}(0) \left( r+ {1\over l}\right) \right)^2 }.
\end{equation}

We collect the estimates $(\ref{bound2})$, $(\ref{bound4})$ and  $(\ref{bound5})$, and use $(\ref{second_der})$ on $N_W(-\lambda/l)$  to get
\begin{align}
\nonumber N_{\tilde{Z}} \left({1 \over l }  \right) &\le N_Z\left( {1 \over \lambda_+} \right)  {8 \lambda^\rho \over \left( 2 + N_Z\left( {1 \over \lambda_+} \right)  {1 \over \lambda_+}   \right) \left( 2-N_{\Psi_{V,\lambda,r}}(0) \left( r+ {1\over l}\right)\right)^2    }  \\
\nonumber &\quad -\lambda  N_{W}\left(-{\lambda \over l} \right)\\
\begin{split}
  &\le N_Z\left( {1 \over \lambda_+} \right)  {8 \lambda^\rho \over \left( 2 + N_Z\left( {1 \over \lambda_+} \right)  {1 \over \lambda_+}   \right) \left( 2+\lambda N_{W}(\lambda r) \left( r+ {1\over l}\right)\right)^2    }  \\
\label{bound1} &\quad+\lambda  {2 \over 1-{\lambda \over l}}.
\end{split}
\end{align}

 To demonstrate that there are  $\sigma$ and $\gamma$ such that the
 nonlinearities $N_{\tilde{Z}}$ and $N_{\tilde{W}}$ satisfy the bounds in $(\ref{nonl-bounds})$,  it is sufficient to come up with  a choice of these constants so that the upper bound $\tilde{\sigma}$ on $N_{\tilde{Z}} \left({1 / l}  \right)$ is less than $\sigma$:
$$\sigma>\tilde{\sigma}$$
where 
\begin{align*}
\tilde{\sigma} &= \sigma  {8 \lambda^\rho \over f} +\lambda  {2 \over 1-{\lambda \over l}},  \quad  f(\sigma,\gamma,l)= \left( 2 + {\sigma \over \lambda_+}   \right) \left( 2+\lambda a(\gamma) \left( r+ {1\over l}\right)\right)^2, \quad ({\rm cf}. \ (\ref{bound1})),\\
a(\gamma)&= {2 \gamma \over 2 + \gamma  {\left( {r \over
\mu_+}-\lambda r\right) } } \quad (\text{cf.\ the second equation
of~\ref{bound2}}).
\end{align*}

Recall the definition $(\ref{eps-delt_-})$ of $\sigma_-$ and $\gamma_-$, and set
$$\Sigma=-{2-(m-\mu)  \over r+ {1 \over \lambda_+}}, \quad \Gamma=-{2-(l-\lambda)  \over 1+ {r \over \mu_+}},$$
Notice,  that
\begin{align*}
f(\Sigma,\Gamma,l)&= {4 (r l +\mu_+ (2+\lambda))(2r \lambda_++(m-\mu)) \over \left( -2\mu_+ (1+\lambda r) +r  (\lambda-l)(1-\lambda \mu_+) \right)^2 (\lambda_+ r+1 ) l^2  } (l-\lambda)^2 \\
&= O(1) (l-\lambda)^2,
\end{align*}
where $O(1)$ is a positive function of $r$, $\lambda$, $l$, $\lambda_+$ and  $\mu_+$ of order $0$ in $(l-\lambda)$. Consider the function
$$g(\sigma,\gamma,l)=(\tilde{\sigma} -\sigma) f(\sigma,\gamma,l) =\sigma  8 \lambda^\rho  +{2  \lambda l \over l-\lambda} f(\sigma,\gamma,l) -\sigma f(\sigma,\gamma,l).$$

We have
\begin{equation}
\label {g} g(\Sigma,\Gamma,l)=-8\lambda^\rho \cdot {2-(m-\mu)  \over r+ {1 \over \lambda_+}}+O(1) (l-\lambda)+O(1) (l-\lambda)^2.
\end{equation}

For any  $\lambda_-<\lambda  < \lambda_+$, we can choose $l$, sufficiently  close to $\lambda$, so that $g(\Sigma,\Gamma,l) \le 0$.  Therefore, for such $l$,
$$N_{\tilde{Z}}\left( 1 \over \lambda_+\right) -\sigma \le {g(\Sigma,\Gamma,l) \over f(\Sigma,\Gamma,l)} \le 0,$$
as required. 

It is clear at this point why we chose to estimate $N_{\tilde{Z}}$ at $1/l$ and not at $1 / \lambda_+$:  had $l$ been chosen equal to $\lambda_+$ from the beginning, one would have to deal with the positive  second and third  terms in $(\ref{g})$. Specifically, one would have to show that $g$, a complicated function of $r$ and  $\lambda$, is negative for a wide range of $r$'s. We have circumvented this problem by estimating the nonlinearity at a point $1/l>1/\lambda_+$  (recall, $N_{\tilde{Z}}(1/l)>N_{\tilde{Z}}(1/\lambda_+)$), and using the fact that $l$ can be freely chosen to be close to  $\lambda$, the solution of $(\ref{lambda_eq})$, so that the second and the third terms in $(\ref{g})$ are small in the absolute value compared to the first one. 

The solution $(\lambda,\mu)$ clearly depends on $(U,V)$, and, seemingly, so do $\Sigma$ and $\Gamma$. However, we can set
$$l=\lambda+\delta(r), \quad m=\mu+\epsilon(r),$$
and choose $\delta(r)$ and $\epsilon(r)$ to be  continuous positive functions of $r$ {\it only}, sufficiently (but not necessarily infinitesimally) small, so that $g<0$. Then
$$\Sigma=-{2-\delta(r)  \over r+ {1 \over \lambda_+(r)}}, \quad \Gamma=-{2-\epsilon(r)  \over 1+ {r \over \mu_+(r)}}$$
are continuous functions of $r$ {\it only}.

\bigskip

\noindent \textit{Step 2).} We will now consider the maximum of $N_{\tilde{W}}(r/m)$, in a similar way. For any $ \mu < m < \mu_+$
\begin{equation}
\label{newNZ} N_{\tilde{W}} \left({r \over m }  \right)=N_W\left(  \hat{\Phi}_{U,\mu,r}\left( {r \over m} \right)  \right) \hat{\Phi}_{U,\mu,r}'\left({r \over m} \right)+ N_{\Phi_{U,\mu,r}}\left(1+{r \over m} \right).
\end{equation}

First, by concavity of $Z$,  
$${Z(0) \over r} \ge {Z(y) \over r+y}={1 \over r+y},$$
therefore,
$$\hat{\Phi}_{U,\mu,r}\left( {r \over m} \right)=-Z\left(Z\left(-{\mu \over m } r \right) \right)  \le -Z(0) \le -{r \over r+y} \le -{r \over r+1},$$
and
$$N_{\tilde{W}} \left({r \over m }  \right) \le N_W\left( -{r \over r+1} \right)  \hat{\Phi}_{U,\mu,r}'\left({r \over m} \right)+ N_{\Phi_{U,\mu,r}}\left(1+{r \over m} \right).$$

We use the bound $(\ref{pos_2})$ to estimate $\hat{\Phi}_{U,\mu,r}'\left({r / m} \right)$ from below.
\begin{equation}
\label{est4}
 \hat{\Phi}_{U,\mu,r}'\left({r \over m} \right)= \Phi_{U,\mu,r}'\left(1+{r \over m} \right) \ge {4 \mu^\rho \over \left( 2-N_{\Phi_{U,\mu,r}}(0) \left( 1+ {r\over m}\right)  \right)^2}.
\end{equation}

The bound $(\ref{pos_1})$ can be used to  bound $N_W(-r/(r+1))$ from  above:
\begin{equation}
\label{est2} N_W \left(-{r \over r+1}\right)  \le {2 N_W\left( {r \over \mu_+} \right)  \over 2 + N_W\left( {r \over \mu_+} \right)  \left( {r \over r+1} +{r \over \mu_+}\right) }.
\end{equation}

We substitute   the  estimates $(\ref{est2})$ and $(\ref{est4})$ in $(\ref{newNZ})$, together with the estimate $(\ref{second_der})$ for  $N_{\Phi_{U,\mu,r}}(1+r/m)$,  to get
\begin{align}
\nonumber N_{\tilde{W}} \left({r \over m}  \right) &\le  N_W\left( {r \over \mu_+} \right)  {8 \mu^\rho \over \left( 2 + N_W\left( {r \over \mu_+} \right) \left( {r \over r+1}+  {r \over \mu_+}  \right)  \right) \left( 2-N_{\Phi_{U,\mu,r}}(0) \left( 1+ {r\over m}\right)\right)^2    } \\
\nonumber &\quad +N_{\Phi_{U,\mu,r}}\left(1+{r  \over m }\right) \\
\begin{split}
&\le  N_W\left( {r \over \mu_+} \right)  {8 \mu^\rho \over \left( 2 + N_W\left( {r \over \mu_+} \right) \left( {r \over r+1}+  {r \over \mu_+}  \right)  \right) \left( 2-N_{\Phi_{U,\mu,r}}(0) \left( 1+ {r\over m}\right)\right)^2    } \\
\label{est7} &\quad +{2 \over {r \over \mu} -{r \over m}}.
\end{split}
\end{align}

Next, by the relation $(\ref{nonl-relation})$
\begin{equation}
\label{est8} N_{\Phi_{U,\mu,r}}(0)=-\mu \left\{N_Z(y) Z'(\mu) +N_Z(\mu)  \right\} \ge -\mu N_Z(\mu).
\end{equation}

Again, we use the bound $(\ref{pos_1})$ to estimate $N_Z(\mu)$ from above.
\begin{equation}
  \label{est33}   N_Z (\mu)  \le {2 N_Z\left( {1 \over \lambda_+} \right)  \over 2 + N_Z\left( {1 \over \lambda_+} \right)  \left( {1 \over \lambda_+} -\mu \right) }. 
\end{equation}



It is sufficient to come up with  a choice of  $\sigma$ and $\gamma$ so that the upper bound on $N_{\tilde{W}} \left({r / m }  \right)$ is less than $\gamma$:
$$\gamma \ge \tilde{\gamma},$$
where 
\begin{align*}
\tilde{\gamma} &= \gamma  {8 \mu^\rho \over h} +{2 \over {r \over \mu} -{r \over m}}, \quad h(\sigma, \gamma,m)=\left( 2 + \gamma \left( {r \over r+1} +  { r \over \mu_+}   \right) \right) \left( 2+\mu b(\sigma) \left( 1+ {r\over m }\right)\right)^2,\\
b&= {2 \sigma \over 2 + \sigma  \left( {1 \over \lambda_+} -\mu \right) } \quad
({\rm cf}. \ (\ref{est8}), (\ref{est33})).
\end{align*}

Let be $\Sigma$ and $\Gamma$ be as in $(\ref{g})$. Notice, that 
\begin{align*}
h(\Sigma,\Gamma,m)&= { 4 ( 2 \lambda_+ r  + (m+\mu \lambda_+ r))^2  ( 2 \mu_+ +r (l-\lambda)(\mu_++r+1)) \over (2 \lambda_+(r+\mu) +(m-\mu) (1-\mu\lambda_+))^2 (r+1)(r+\mu_+)m^2 }(m-\mu)^2   \\
&= O(1) (m-\mu)^2.
\end{align*}

Consider the function
$$z(\sigma,\gamma,m)=(\tilde{\gamma}-\gamma) h(\sigma,\gamma,m)=\gamma 8 \mu^\rho+{2 \mu m \over r}  {h(\sigma,\gamma,m) \over (m-\mu)}-\gamma h(\sigma,\gamma,m).$$

We have 
$$z(\Sigma,\Gamma,m)=-8 \mu^\rho {2-(l-\lambda) \over r +{1 \over \lambda_+}} + O(1) (m-\mu) +O(1) (m-\mu)^2.$$

As we have already discussed, for any $\mu_- <\mu < \mu_+$ we can choose 
$$m=\mu+\epsilon(r)$$
where $\epsilon(r)$ is positive, sufficiently small and  continuous, so that $z(\Sigma,\Gamma,m) \le 0$. Therefore, 
$$N_{\tilde{W}}\left( r \over m\right) -\gamma \le {z(\Sigma,\Gamma,m) \over  h(\Sigma,\Gamma,m)} \le 0$$
as needed.

\bigskip

\noindent \textit{Step 3).} We shall now prove the claim about the continuity of the operator $\cT_r$. Recall, that according to Lemma $(\ref{scalings})$, for every fixed $r$, the map  $(U,V) \mapsto (\lambda(V),\mu(U))$, is continuous from $\Omega^r_{<\Sigma(r)}(J_U) \times \Omega^1_{<\Gamma(r)}(J_V)$ to $(\lambda_-(r),\lambda_+(r)) \times (\mu_-(r),\mu_+(r))$. This, together with the continuity of $\Psi_{V,\lambda,r}$ in $U$ and $\lambda$, and $\Phi_{U,\mu,r}$ in $U$ and $\mu$, implies that the map
$$(U,V) \mapsto \left( \lambda^{-\rho}(V) U \circ \Psi_{V,\lambda(V),r}, \mu^{-\rho}(U) V \circ \Phi_{U,\mu(U),r} \right)$$
is continuous from $\Omega^r_{<\Sigma(r)}(J_U) \times \Omega^1_{<\Gamma(r)}(J_V)$ to itself.

\end{proof}

\begin{figure}

 \begin{center}
\begin{tabular}{c c}
 {\includegraphics[height=55mm,width=55mm,angle=-90]{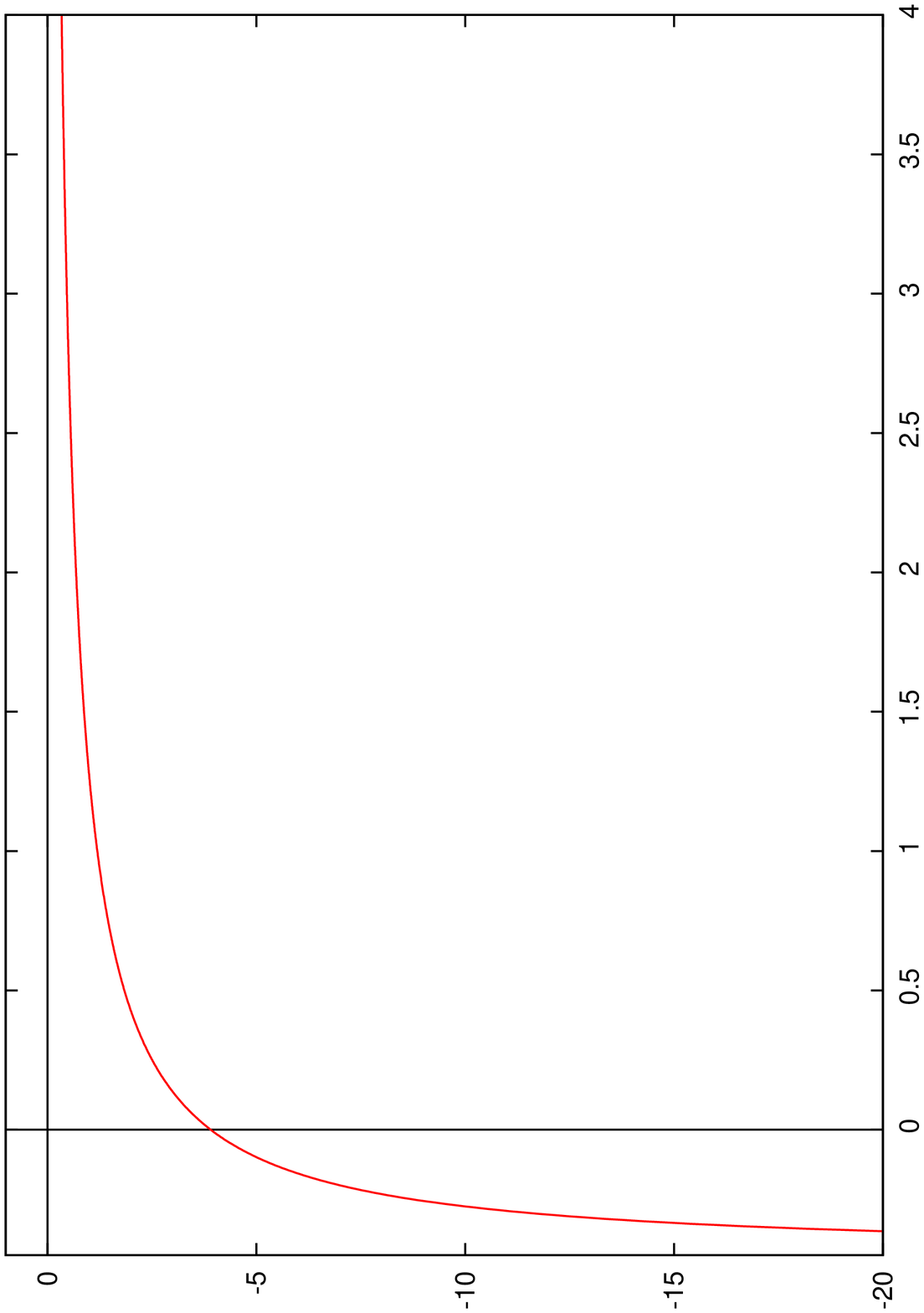}} & {!}{\includegraphics[height=55mm,width=55mm,angle=-90]{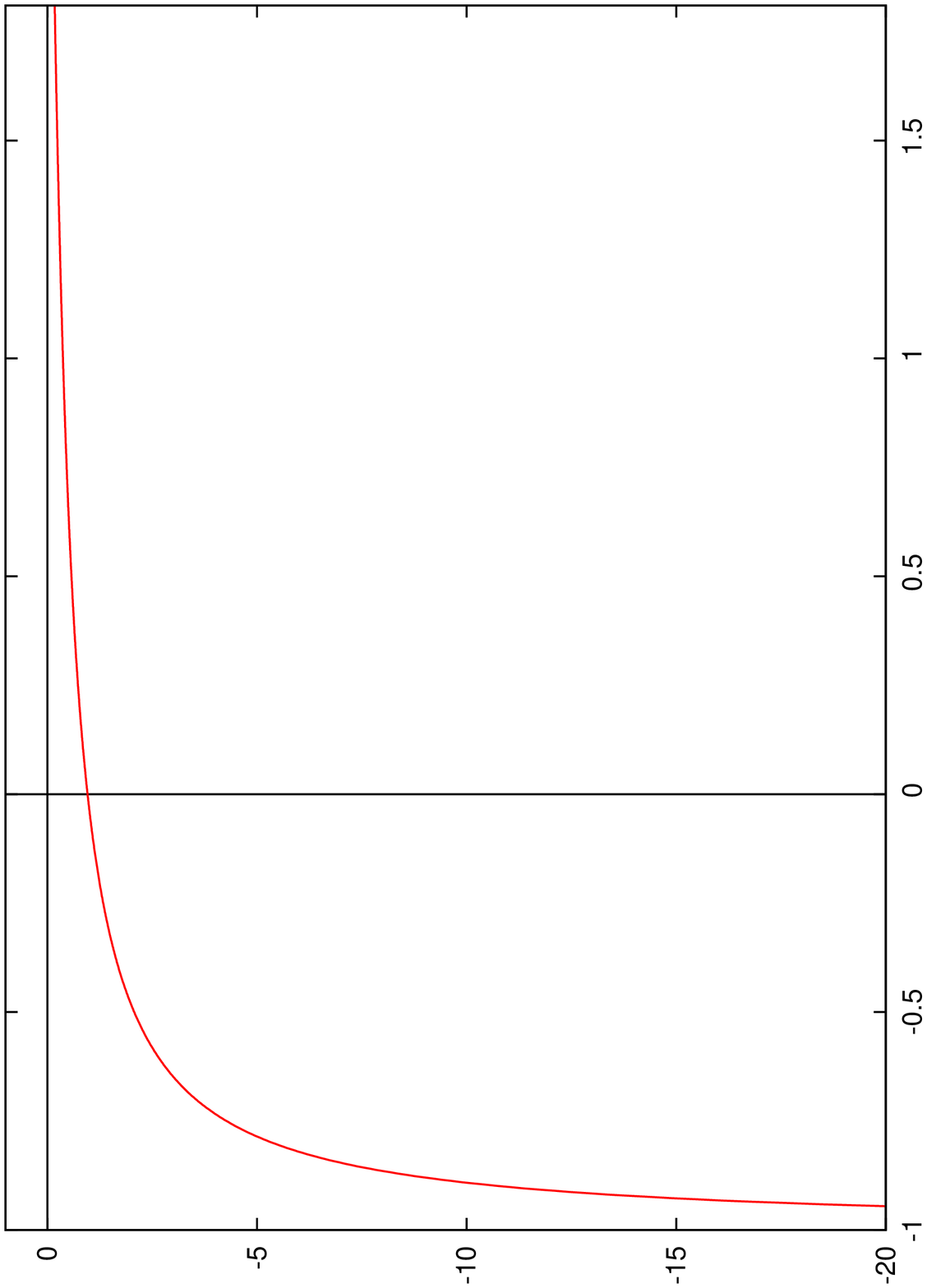}} 
\end{tabular} 
\caption{ \it Nonlinearities $N_Z$ (a)) and $N_W$ (b)) for the fixed point ($\rho=2$) computed in \cite{Win1}.}
\end{center}
\end{figure}

\section{Existence of a renormalization fixed point}\label{proofB}

In this section we prove the continuity statement of Part $i)$ of Theorem A. We also prove all of Part $iii)$ of Theorem A, and Theorem B.

\begin{proof}
In Lemmas $\ref{scalings}$ and  $\ref{r-interval}$, and in Prop.
$\ref{inv-nonlinearity}$  we have shown that there exists an interval
$({r_-},{r_+})$, $0<{r_-}<{r_+}$, of parameter values $r$, and, for every $r
\in ({r_-},{r_+})$, intervals $J_U$ and $J_V$ and bounds $\Sigma(r)$ and
$\Gamma(r)$, continuous in $r$, such that the operator $\cT_r$ maps the
relatively compact set $\Omega^r_{<\Sigma(r)}(J_U) \times \Omega^1_{<\Gamma(r)}(J_V)$ into itself, and, furthermore,  $\lambda^\rho(V,r)$ and $\mu^\rho(U,r)$ are contained in some subinterval $[ \Delta,(1- \Delta)^2] \subset (0,1)$, $\mu_-({r_-})>\lambda_+(r_+)$ and  $\lambda_-(V,r_+)>\mu_+(r_+)$.

Now, consider a sequence $(U_n,V_n) \equiv \cT_r^n(U_0,V_0)$ (cf.~$(\ref{new_eqqq})$) with $(U_0,V_0) \in  \Omega_{<\Sigma}^r(J_U) \times \Omega^1_{<\Gamma}(J_V)$. Notice, since $\Sigma(r)$ and $\Gamma(r)$ are continuous functions of $r$, and since the dependence of $J_U$ and $J_V$ on $r$ is also continuous, such $(U_0,V_0)$ can be chosen in such a way that the the map $r \mapsto (r,U_0,V_0)$ is continuous from $({r_-},r_+)$ to $\sqcup_{r \in ({r_-},r_+)} \Omega^r_{<\Sigma(r)}(J_U) \times \Omega^1_{<\Gamma(r)}(J_V)$ (here $\sqcup$ stands for a disjoint union).

 Since the set $\Omega_{<\Sigma}^r(J_U) \times \Omega^1_{<\Gamma}(J_V)$ is invariant under $\cT_r$, the scalings  $\lambda_n^\rho=\Psi_{V_n,\lambda_n,r}'(0)$ and $\mu_n^\rho=\Phi_{U_n,\mu_n,r}'(0)$ satisfy the bounds from Lemma $\ref{scalings}$, and are in $[\Delta,(1-\Delta)^2] \subset (0,1)$. Consider
\begin{align*}
U_n&= {1 \over \lambda_{n-1}^\rho  \ldots \lambda_1^\rho \lambda_0^\rho }
\Psi_{V_{n-1},\lambda_{n-1},r}  \circ \ldots \circ \Psi_{V_1,\lambda_1,r} \circ
\Psi_{V_0,\lambda_0,r},& \lambda_k^\rho &=\Psi_{V_k,\lambda_k,r}'(0),\\
V_n&= {1 \over \mu_{n-1}^\rho \ldots \mu_1^\rho \mu_0^\rho }
\Phi_{U_{n-1},\mu_{n-1},r} \circ \ldots \circ \Phi_{U_1,\mu_1,r} \circ
\Phi_{U_0,\mu_0,r},& \mu_k^\rho &=\Phi_{U_k,\mu_k,r}'(0).
\end{align*}

For each $k$, there exist  neighborhoods $D_U^k$ and $D_V^k$ of zero, such that 
$$\left|\Psi_{V_k,\lambda_k,r}(z)\right| <A_k |z|, \quad z \in D_U^k, \quad {\rm and} \quad \left|\Phi_{U_k,\mu_k,r}(z)\right| <B_k |z|, \quad z \in D_V^k,$$
for some $A_k < 1$ and $B_k <1$, such that $A_k^2<\lambda_k^\rho<A_k$ and $B_k^2<\mu_k^\rho<B_k$. Specifically, one can choose $A_k=\lambda^\rho_k (1+\Delta)$ and $B_k=\mu^\rho_k (1+\Delta)$.  Notice, that since $A_k \ge \lambda_k^\rho + \Delta^2>\Delta+\Delta^2$ (and similarly for $B_k$), the domains $D_U^k$ and $D_V^k$ do not shrink to zero, that is, there exists $s>0$, such that $\field{D}_s(0) \subset D_U^k$ and  $\field{D}_s(0) \subset D_V^k$ for all $k \ge 0$ (here, $\field{D}_s(0)$ denotes a  disk around $0$ of radius $s$ in $\fC$).  Thus, for any $z \in \field{D}_s(0)$
\begin{align*}
\left|\Psi_{V_{n-1},\lambda_{n-1},r}  \circ \ldots \circ \Psi_{V_1,\lambda_1,r} \circ \Psi_{V_0,\lambda_0,r}(z)\right| &\le \prod_{k=0}^{n-1} A_k s,\\
\left|\Phi_{U_{n-1},\mu_{n-1},r}  \circ \ldots \circ \Phi_{U_1,\mu_1,r} \circ \Phi_{U_0,\mu_0,r}(z)\right| &\le \prod_{k=0}^{n-1} B_k s.
\end{align*}

Furthermore, for all $z \in \field{D}_s(0)$, $|\Psi_{V_k,\lambda_k,r}(z)-\lambda^\rho_k z| \le K |z^2|$ for some constant $K$, therefore 
\begin{align*}
\left|\Psi_{V_n,\lambda_n,r}  \circ \ldots \circ \Psi_{V_0,\lambda_0,r}(z) -\lambda^\rho_n \Psi_{V_{n-1},\lambda_{n-1},r}  \circ \ldots \circ \Psi_{V_0,\lambda_0,r}(z)\right| &\le K  \left( \prod_{k=0}^{n-1} A_k s \right)^2,\\
\left|\Phi_{U_n,\mu_n,r}  \circ \ldots \circ \Phi_{U_0,\mu_0,r}(z) -\mu^\rho_n \Phi_{U_{n-1},\mu_{n-1},r}  \circ \ldots \circ \Phi_{U_0,\mu_0,r}(z)\right| &\le K  \left( \prod_{k=0}^{n-1} B_k s \right)^2,
\end{align*}
while, with our choice of $A_k$ and $B_k$,
\begin{align}
\label{uniformU} \left|U_{n+1}(z) -U_{n}(z)\right| &\le {K s^2 \over \lambda_n^\rho}  \prod_{k=0}^{n-1} {A_k^2 \over  \lambda_k^\rho}  \le {K s^2 \over \Delta}  (1-\Delta^2)^{2n},\\
\label{uniformV} \left|V_{n+1}(z) -V_{n}(z)\right| &\le {K s^2 \over \mu_n^\rho}  \prod_{k=0}^{n-1} {B_k^2 \over \mu_k^\rho} \le {K s^2 \over \Delta}  (1-\Delta^2)^{2n}.
\end{align}

We therefore obtain that for every $r \in ({r_-},r_+)$ the sequences $(U_n,V_n)$ converges uniformly on $\field{D}_s(0)$, and, in fact, as $(\ref{uniformU})$ and $(\ref{uniformV})$ show, the rate of convergence is independent of $r$. Since every $(U_n,V_n) \in \Omega(J_U) \times \Omega(J_V)$ this convergence is uniform on every compact subset of $\field{C}_{J_U} \times \field{C}_{J_V}$.  The limit of this sequence, $(U^*_r,V^*_r)$, satisfies 
\begin{align*}
\lambda^\rho(V^*_r,r) \ U^*_r&= U^*_r \circ \Psi_{V^*_r,\lambda(V^*_r,r),r},\\
\mu^\rho(U^*_r,r) \ V^*_r    &= V^*_r \circ \Phi_{U^*_r,\mu(U^*_r,r),r},
\end{align*}
on any compact subset of   $\field{C}_{J_U} \times \field{C}_{J_V}$, and, by extension, on all of   $\field{C}_{J_U} \times \field{C}_{J_V}$.  If $U_0$ and $V_0$ are univalent in $\field{C}_{J_U}$ and $\field{C}_{J_V}$, respectively, then so are $U_n$ and $V_n$, and, by Hurwitz convergence theorem, so are $U_r^*$ and $V_r^*$.

We will proceed to demonstrate by induction that the map $r \mapsto (r,U_{n+1},V_{n+1},\lambda_{n},\mu_{n})$ is continuous from $({r_-},r_+)$ to  $\sqcup_{r \in ({r_-},r_+)} \Omega^r_{<\Sigma(r)}(J_U) \times \Omega^1_{<\Gamma(r)}(J_V) \times [\Delta,(1-\Delta)^2]^2$.

Consider the functions
$$f(\lambda;(r,V))=\lambda^{\rho-1}-{r \over \rho}{V'(\lambda r+1) \over V(\lambda r+1)}, \quad f_0(\lambda,r) \equiv f(\lambda;(r,V_0)).$$
Since both $\lambda$ and $\mu$ are contained in $[\Delta,(1-\Delta)^2]$, the points $\lambda r+1$, $r+\mu$ and $r+y$ (recall, $y<1$) are always contained compactly in  $J_V$ and $J_U$ respectively, and the map
$$(\lambda;(r,V)) \mapsto f(\lambda;(r,V))$$
is clearly continuous from $[\Delta,(1-\Delta)^2] \times \sqcup_{r \in ({r_-},r_+)}\Omega^1_{<\Gamma(r)}(J_V)$ to $C^0([\Delta,(1-\Delta)^2] \times \sqcup_{r \in ({r_-},r_+)} \Omega^1_{<\Gamma(r)}(J_V),\field{R})$.

Recall, that according to the Lemma $(\ref{scalings})$, for each $r\in({r_-},r_+)$ the function $f_0$ has a single zero in $(\lambda_-(r),\lambda_+(r))$. Since $(U_0,V_0)$ have been chosen to  be  continuous functions of  $r$, and since the map $z \mapsto V_0(z)$ is  continuously differentiable in $J_{V} \ni \lambda r+1$, the map $(\lambda,r) \mapsto f_0(\lambda,r)$ is continuous, and so is the unique zero  of $f_0(\cdot,r)$: the map $r \mapsto \lambda_0(r)$ is continuous from $({r_-},r_+)$ to $[\Delta,(1-\Delta)^2]$. One can argue in a similar way, that the map $r \mapsto \mu_0(r)$ is continuous from $({r_-},r_+)$ to $[\Delta,(1-\Delta)^2]$ as well.

 This, together with the  continuity of $\Psi$ in $V$, $\lambda$ and $r$, and $\Phi$ in $U$, $\mu$ and $r$, implies that  the map 
$$r \mapsto (r,U_1,V_1)=\left(r, \lambda^{-\rho}_0 U_0 \circ \Psi_{V_0,\lambda_0,r}, \mu^{-\rho}_0 V_0 \circ \Phi_{U_0,\mu_0,r}\right)$$ 
is continuous from  $({r_-},r_+)$ to $\sqcup_{r \in ({r_-},r_+)} \Omega^r_{<\Sigma(r)}(J_U) \times \Omega^1_{<\Gamma(r)}(J_V)$. 

Next, assume, that $r \mapsto (r,U_{n},V_{n},\lambda_{n-1},\mu_{n-1})$ is continuous. Then, one can  argue identically to the case $n=1$ above (substituting $(U_{n},V_n)$ for  $(U_{0},V_0)$) that 
$$r \mapsto (r,U_{n+1},V_{n+1},\lambda_{n},\mu_{n})$$
is continuous from  $({r_-},r_+)$ to $\sqcup_{r \in ({r_-},r_+)} \Omega^r_{<\Sigma(r)}(J_U) \times \Omega^1_{<\Gamma(r)}(J_V) \times [\Delta, (1-\Delta)^2]^2$.

According to $(\ref{uniformU})$  and $(\ref{uniformV})$, $(U_n,V_n)$ converge uniformly in $r$. Therefore, the functions $f_n(\lambda,r) \equiv f(\lambda;(r,V_n))$ converge uniformly on $[\Delta,(1-\Delta)^2] \times ({r_-},r_+)$, and so do their unique zeros $\lambda_n(r)$. Arguing in a similar way, one can obtain that $\mu_n(r)$ converge uniformly on $({r_-},r_+)$.


To summarize, we have argued that the maps $r \mapsto (U_{n+1},V_{n+1},\lambda_{n},\mu_{n})$ are continuous, while the iterates $(U_{n+1},V_{n+1},\lambda_{n},\mu_{n})$ converge uniformly in $r$. This implies that the map
$$r \mapsto (U^*_r,V^*_r, \lambda(V^*_r,r), \mu(U^*_r,r))$$ 
is continuous  on $({r_-},r_+)$.  Since $\mu(U^*_{r_-},{r_-})>\lambda(V^*_{r_-},{r_-})$   and $\mu(U^*_{r_+},{r_+})<\lambda(V^*_{r_+},{r_+})$, the continuous functions $\lambda^*(V^*_r,r)$ and $\mu^*(U^*_r,r)$ must assume the same value at some point $r'$. We would like to emphasize, that we can not demonstrate that this point is unique.  We have for $r=r'$:
\begin{align*}
\lambda^\rho(V^*_{r'},r') \  U^*_{r'}&= U^*_{r'}     \circ \Psi_{V^*_{r'},\lambda(V^*_{r'},r'),r'},\\
\lambda^\rho(U^*_{r'},r') \ V^*_{r'}    &= V^*_{r'} \circ \Phi_{U^*_{r'},\lambda(U^*_{r'},r'),r'},
\end{align*}
on   $\field{C}_{J_U} \times \field{C}_{J_V}$. Recall, that $U_{r'}^*$ and  $V_{r'}^*$ are univalent on $\field{C}_{J_U}$ and $\field{C}_{J_V}$, respectively, if $U_0$ and $V_0$ are. This implies that the following Lorenz map is a renormalization fixed point of type  $(\{0,1\},\{1,0,0\})$:
$$(l \circ p_\rho, t \circ p_\rho),$$
where 
$$l(z)=r'-(U^*_{r'})^{-1}(z/a), \quad t(z)=(V^*_{r'})^{-1}(z/b) -1,$$
are analytic diffeomorphisms on 
$$a \circ U^*_{r'}(\fC_{J_{U}}), \quad a(z) \equiv a z,$$
and
$$b \circ V^*_{r'}(\fC_{J_{V}}), \quad b(z) \equiv b z,$$
respectively, and $a$ and $b$ are as in $(\ref{ab})$.
\end{proof}


\end{document}